\newcommand{\Year}[1]{\gdef\@Year{#1}}
\newcommand{\Volume}[1]{\gdef\@Volume{#1}}
\newcommand{\Issue}[1]{\gdef\@Issue{#1}}
\def \Idates {\let\thefootnote\relax\footnotetext{Received \DTMusedate{ReceiptDate}; accepted \DTMusedate{AcceptanceDate}; published \DTMusedate{PublicationDate}.}}
\def\publname{Journal of Math-Recherche and Applications \newline
  ISSN 2291-8639 \newline
  Volume \@2018, Number \@Issue \ (\@2018), \thepage-\pageref{LastPage} \newline
  http://revues.imist.ma}
\newtheorem{thm}{Theorem}[section]
\newtheorem{cor}[thm]{Corollary}
\newtheorem{lem}[thm]{Lemma}
\theoremstyle{definition}
\newtheorem{defn}[thm]{Definition}
\newtheorem{exmp}[thm]{Example}
\theoremstyle{remark}
\let\c@equation\c@thm
\numberwithin{equation}{section}
\title{Perturbation and Stability of Operator Frame for $End_{\mathcal{A}}^{\ast}(\mathcal{H})$}
\subjclass[2010]{Primary 42C15; Secondary 46L05}
\author[Mohamed Rossafi and Abdellatif Akhlidj]{\bfseries Mohamed Rossafi$^{1,*}$ and Abdellatif Akhlidj$^2$}
\address{$^1$Department of Mathematics\\
	University of Ibn Tofail\\
	B.P. 133, Kenitra \\
	Morocco}
\address{$^2$Department of Mathematics\\
	University of Hassan II\\
	Casablanca \\
	Morocco}
\address{$^*$Corresponding author: \textnormal{rossafimohamed@gmail.com}}
\date{published .. .., 2018}
\begin{document}

\begin{abstract}

Frame theory is recently an active research area in mathematics, computer science, and engineering with many exciting applications in a variety of different fields. In this paper, we firstly give a characterization of operator frame for $End_{\mathcal{A}}^{\ast}(\mathcal{H})$ and $K$-operator frames for $End_{\mathcal{A}}^{\ast}(\mathcal{H})$. Lastly we consider the stability of operator frame for $End_{\mathcal{A}}^{\ast}(\mathcal{H})$ and $K$-operator frames for $End_{\mathcal{A}}^{\ast}(\mathcal{H})$ under perturbation and we establish some results.

\end{abstract}

\maketitle




\section{Introduction and preliminaries}

Frames for Hilbert spaces were introduced in 1952 by Duffin and Schaefer \cite{Duf}. They abstracted the fundamental notion of Gabor \cite{Gab} to study signal processing. Many generalizations of frames were introduced, frames of subspaces \cite{Asg}, Pseudo-frames \cite{Oga}, oblique frames \cite{Eld}, g-frames \cite{Kho2}, $\ast$-frame \cite{Deh} and $\ast$-K-g-frames \cite{Ros2} in Hilbert $ \mathcal{A} $-modules. In 2000, Frank-Larson \cite{Lar1} introduced the notion of frames in Hilbert $ \mathcal{A} $-modules as a generalization of frames in Hilbert spaces. Recentely, A. Khosravi and B. Khosravi \cite{Kho2} introduced the g-frames theory in Hilbert $ \mathcal{A} $-modules, and Alijani, and Dehghan \cite{Deh} introduced the g-frames theories in Hilbert $ \mathcal{A} $-modules. N. Bounader and S. Kabbaj \cite{Kab} and A. Alijani \cite{Ali} introduced the $\ast$-g-frames which are generalizations of g-frames in Hilbert $ \mathcal{A} $-modules. The notion of Operator frame for the space $B(\mathcal{H} )$ of all bounded linear operators on Hilbert space $\mathcal{H}$ was introduced by Chun-Yan Li and Huai-Xin Cao \cite{Li} and the notion of $K$-operator frames on $B(\mathcal{H})$ for an operator $K\in B(\mathcal{H})$ was introduced by C. Shekhar and S. K. Kaushik \cite{She}, M. Rossafi and S. Kabbaj \cite{Ros} introduced the notion of Operator frame for the space $End_{\mathcal{A}}^{\ast}(\mathcal{H})$ of all adjointable operators on a Hilbert $\mathcal{A}$-module $\mathcal{H}$ and the notion of $K$-Operator frame on $End_{\mathcal{A}}^{\ast}(\mathcal{H})$ for an operator $K\in End_{\mathcal{A}}^{\ast}(\mathcal{H})$. In this paper, we give a characterization of operator frame for $End_{\mathcal{A}}^{\ast}(\mathcal{H})$ and $K$-operator frames for $End_{\mathcal{A}}^{\ast}(\mathcal{H})$. Also, we consider the stability of operator frame for $End_{\mathcal{A}}^{\ast}(\mathcal{H})$ and $K$-operator frames for $End_{\mathcal{A}}^{\ast}(\mathcal{H})$ under perturbation and we establish some results of the finite sum of operator frame for $End_{\mathcal{A}}^{\ast}(\mathcal{H})$ and $K$-operator frames for $End_{\mathcal{A}}^{\ast}(\mathcal{H})$.\\
Let $\mathbb{J}$ be a finite or countable index subset of $\mathbb{N}$. In this section we briefly recall the definitions and basic properties of $C^{\ast}$-algebra, Hilbert $\mathcal{A}$-modules, frame in Hilbert $\mathcal{A}$-modules. For information about frames in Hilbert spaces we refer to \cite{Chr}. Our reference for $C^{\ast}$-algebras is \cite{{Dav},{Con}}. For a $C^{\ast}$-algebra $\mathcal{A}$ if $a\in\mathcal{A}$ is positive we write $a\geq 0$ and $\mathcal{A}^{+}$ denotes the set of positive elements of $\mathcal{A}$.
\begin{defn}\cite{Con}.
	If $\mathcal{A}$ is a Banach algebra, an involution is a map $ a\rightarrow a^{\ast} $ of $\mathcal{A}$ into itself such that for all $a$ and $b$ in $\mathcal{A}$ and all scalars $\alpha$ the following conditions hold:
	\begin{enumerate}
		\item  $(a^{\ast})^{\ast}=a$.
		\item  $(ab)^{\ast}=b^{\ast}a^{\ast}$.
		\item  $(\alpha a+b)^{\ast}=\bar{\alpha}a^{\ast}+b^{\ast}$.
	\end{enumerate}
\end{defn}
\begin{defn}\cite{Con}.
	A $\mathcal{C}^{\ast}$-algebra $\mathcal{A}$ is a Banach algebra with involution such that :$$\|a^{\ast}a\|=\|a\|^{2}$$ for every $a$ in $\mathcal{A}$.
\end{defn}
\begin{exmp}
	$ \mathcal{B}=B(\mathcal{H}) $	the algebra of bounded operators on a Hilbert space, is a  $\mathcal{C}^{\ast}$-algebra, where for each operator $A$, $A^{\ast}$ is the adjoint of $A$.
\end{exmp}
\begin{defn}\cite{Kap}.
	Let $ \mathcal{A} $ be a unital $C^{\ast}$-algebra and $\mathcal{H}$ be a left $ \mathcal{A} $-module, such that the linear structures of $\mathcal{A}$ and $ \mathcal{H} $ are compatible. $\mathcal{H}$ is a pre-Hilbert $\mathcal{A}$-module if $\mathcal{H}$ is equipped with an $\mathcal{A}$-valued inner product $\langle.,.\rangle :\mathcal{H}\times\mathcal{H}\rightarrow\mathcal{A}$, such that is sesquilinear, positive definite and respects the module action. In the other words,
	\begin{itemize}
		\item [(i)] $ \langle x,x\rangle\geq0 $ for all $ x\in\mathcal{H} $ and $ \langle x,x\rangle=0$ if and only if $x=0$.
		\item [(ii)] $\langle ax+y,z\rangle=a\langle x,y\rangle+\langle y,z\rangle$ for all $a\in\mathcal{A}$ and $x,y,z\in\mathcal{H}$.
		\item[(iii)] $ \langle x,y\rangle=\langle y,x\rangle^{\ast} $ for all $x,y\in\mathcal{H}$.
	\end{itemize}	 
\end{defn}
For $x\in\mathcal{H}, $ we define $||x||=||\langle x,x\rangle||^{\frac{1}{2}}$. If $\mathcal{H}$ is complete with $||.||$, it is called a Hilbert $\mathcal{A}$-module or a Hilbert $C^{\ast}$-module over $\mathcal{A}$. For every $a$ in a $C^{\ast}$-algebra $\mathcal{A}$, we have $|a|=(a^{\ast}a)^{\frac{1}{2}}$ and the $\mathcal{A}$-valued norm on $\mathcal{H}$ is defined by $|x|=\langle x, x\rangle^{\frac{1}{2}}$ for $x\in\mathcal{H}$.
\begin{defn}\cite{Lar1}.
	Let $ \mathcal{H} $ be a Hilbert $\mathcal{A}$-module. A family $\{x_{i}\}_{i\in\mathbb{J}}$ of elements of $\mathcal{H}$ is a frame for $ \mathcal{H} $, if there
	exist two positive constants $A$ , $B$, such that for all $x\in\mathcal{H}$,
	\begin{equation}\label{eq1}
	A\langle x,x\rangle\leq\sum_{i\in\mathbb{J}}\langle x,x_{i}\rangle\langle x_{i},x\rangle\leq B\langle x,x\rangle.
	\end{equation}
	The numbers $A$ and $B$ are called lower and upper bound of the frame, respectively. If $A=B=\lambda$, the frame is $\lambda$-tight. If $A = B = 1$, it is called a normalized tight frame or a Parseval frame. If the sum in the middle of \eqref{eq1} is convergent in norm, the frame is called standard. If only upper inequality of \eqref{eq1} hold, then $\{x_{i}\}_{i\in I}$ is called a Bessel sequence for $\mathcal{H}$.
\end{defn}
The following lemmas will be used to prove our mains results
\begin{lem}\label{l1}\cite{Pas}.
	Let $\mathcal{H}$ be a Hilbert $\mathcal{A}$-module. If $T\in End_{\mathcal{A}}^{\ast}(\mathcal{H})$, then $$\langle Tx,Tx\rangle\leq\|T\|^{2}\langle x,x\rangle, \forall x\in\mathcal{H}$$
\end{lem}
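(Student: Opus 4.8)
The plan is to reduce the $\mathcal{A}$-valued inequality to the order structure of the $C^{\ast}$-algebra $End_{\mathcal{A}}^{\ast}(\mathcal{H})$. First I would use adjointability of $T$ to rewrite, for $x\in\mathcal{H}$,
$$\langle Tx,Tx\rangle=\langle T^{\ast}Tx,x\rangle .$$
Here $T^{\ast}T$ is a positive element of the unital $C^{\ast}$-algebra $End_{\mathcal{A}}^{\ast}(\mathcal{H})$ (its unit being the identity operator $I_{\mathcal{H}}$), and by the $C^{\ast}$-identity $\|T^{\ast}T\|=\|T\|^{2}$.

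Next I would invoke the standard $C^{\ast}$-algebra fact that a positive element $a$ of a unital $C^{\ast}$-algebra satisfies $a\leq\|a\|\cdot 1$: indeed $\sigma(a)\subseteq[0,\|a\|]$, so $\|a\|\cdot 1-a$ has nonnegative spectrum and is therefore positive by continuous functional calculus. Applied to $a=T^{\ast}T$ this gives
$$\|T\|^{2}I_{\mathcal{H}}-T^{\ast}T\geq 0 \quad\text{in } End_{\mathcal{A}}^{\ast}(\mathcal{H}),$$
hence $\|T\|^{2}I_{\mathcal{H}}-T^{\ast}T=S^{\ast}S$ for some $S\in End_{\mathcal{A}}^{\ast}(\mathcal{H})$. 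Pairing with an arbitrary $x\in\mathcal{H}$ and using adjointability of $S$ together with the inner-product axioms,
$$\|T\|^{2}\langle x,x\rangle-\langle T^{\ast}Tx,x\rangle=\langle S^{\ast}Sx,x\rangle=\langle Sx,Sx\rangle\geq 0 ,$$
and combining this with the first display yields $\langle Tx,Tx\rangle\leq\|T\|^{2}\langle x,x\rangle$, as claimed.

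The computation is entirely routine; the one point I would be careful about is the passage from ``$P\geq 0$ as an element of the $C^{\ast}$-algebra $End_{\mathcal{A}}^{\ast}(\mathcal{H})$'' to ``$\langle Px,x\rangle\geq 0$ in $\mathcal{A}$ for every $x\in\mathcal{H}$'', which is precisely the remark that positivity in $End_{\mathcal{A}}^{\ast}(\mathcal{H})$ means being of the form $S^{\ast}S$, after which axiom (i) of the $\mathcal{A}$-valued inner product finishes it. Alternatively, as the authors indicate, the statement may simply be quoted from Paschke's foundational work on Hilbert $C^{\ast}$-modules \cite{Pas}.
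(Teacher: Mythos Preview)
Your argument is correct. The paper does not actually supply a proof of this lemma: it is stated with the citation \cite{Pas} and used as a black box, exactly as you note in your final remark. What you have written is the standard route---rewrite $\langle Tx,Tx\rangle=\langle T^{\ast}Tx,x\rangle$, use that $T^{\ast}T$ is a positive element of the unital $C^{\ast}$-algebra $End_{\mathcal{A}}^{\ast}(\mathcal{H})$ and hence satisfies $T^{\ast}T\leq\|T^{\ast}T\|I_{\mathcal{H}}=\|T\|^{2}I_{\mathcal{H}}$, then transfer this operator inequality to an inequality in $\mathcal{A}$ by writing $\|T\|^{2}I_{\mathcal{H}}-T^{\ast}T=S^{\ast}S$ and evaluating $\langle S^{\ast}Sx,x\rangle=\langle Sx,Sx\rangle\geq 0$. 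Each step is justified, and your care about the passage from positivity in $End_{\mathcal{A}}^{\ast}(\mathcal{H})$ to positivity of $\langle Px,x\rangle$ in $\mathcal{A}$ is well placed; that is the only point where one might carelessly invoke a Hilbert-space reflex that does not carry over automatically.
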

\begin{lem}\label{l2}\cite{Zha}.
	Let $\mathcal{H}$ be a Hilbert $\mathcal{A}$-module over a $C^{\ast}$-algebra $\mathcal{A}$. Let $T, S\in End_{\mathcal{A}}^{\ast}(\mathcal{H})$. If $Rang(S)$ is closed, then the following statements are equivalent:
	\begin{itemize}
		\item [(i)] $Rang(T)\subseteq Rang(S)$.
		\item [(ii)] $\lambda TT^{\ast}\leq SS^{\ast}$ for some $\lambda>0$.
		\item [(iii)] There exists a positive real number $\mu>0$ such that $\mu\|T^{\ast}x\|^{2}\leq\|S^{\ast}x\|^{2}$, for all $x\in\mathcal{H}$.
		\item[(iv)] There exists $Q\in End_{\mathcal{A}}^{\ast}(\mathcal{H})$ such that $T=SQ$.
	\end{itemize}
\end{lem}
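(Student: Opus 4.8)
The plan is to prove the single cycle of implications $(i)\Rightarrow(iv)\Rightarrow(ii)\Rightarrow(iii)\Rightarrow(i)$; since the four statements are then mutually equivalent, nothing else is needed. Two of these arrows are essentially computational. For $(iv)\Rightarrow(ii)$ I would start from $T=SQ$ with $Q\in End_{\mathcal{A}}^{\ast}(\mathcal{H})$, apply Lemma~\ref{l1} to $Q^{\ast}$ to get $QQ^{\ast}\le\|Q\|^{2}\,\mathrm{id}_{\mathcal{H}}$, and then conjugate by $S$, using that $A\le B$ implies $SAS^{\ast}\le SBS^{\ast}$ (write $B-A=C^{\ast}C$, so $S(B-A)S^{\ast}=(CS^{\ast})^{\ast}(CS^{\ast})\ge 0$); this gives $TT^{\ast}=S(QQ^{\ast})S^{\ast}\le\|Q\|^{2}SS^{\ast}$, i.e.\ $(ii)$ with $\lambda=\|Q\|^{-2}$ (the case $Q=0$, which forces $T=0$, being trivial). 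For $(ii)\Rightarrow(iii)$ I would evaluate $\lambda TT^{\ast}\le SS^{\ast}$ at a vector $x$ to obtain $\lambda\langle T^{\ast}x,T^{\ast}x\rangle\le\langle S^{\ast}x,S^{\ast}x\rangle$ in $\mathcal{A}^{+}$, then apply the $C^{\ast}$-norm, which is monotone on positive elements, to get $\lambda\|T^{\ast}x\|^{2}\le\|S^{\ast}x\|^{2}$; thus $(iii)$ holds with $\mu=\lambda$.

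The two remaining arrows, $(i)\Rightarrow(iv)$ and $(iii)\Rightarrow(i)$, are where the hypothesis that $Rang(S)$ is closed must be used, and I would handle them by first recording the relevant structure theory for an adjointable operator of closed range on a Hilbert $\mathcal{A}$-module: if $Rang(S)$ is closed then it is orthogonally complemented with $Rang(S)^{\perp}=\ker S^{\ast}$, the submodule $Rang(S^{\ast})$ is likewise closed and complemented, and $S$ admits a Moore--Penrose inverse $S^{\dagger}\in End_{\mathcal{A}}^{\ast}(\mathcal{H})$ with $SS^{\dagger}S=S$ and $SS^{\dagger}$ the orthogonal projection of $\mathcal{H}$ onto $Rang(S)$. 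Granting this, $(i)\Rightarrow(iv)$ is quick: put $Q:=S^{\dagger}T\in End_{\mathcal{A}}^{\ast}(\mathcal{H})$; for each $x$ we have $Tx\in Rang(T)\subseteq Rang(S)$, hence $SS^{\dagger}(Tx)=Tx$, so $SQ=SS^{\dagger}T=T$. For $(iii)\Rightarrow(i)$ I would note that the inequality in $(iii)$ forces $\ker S^{\ast}\subseteq\ker T^{\ast}$; letting $P$ be the orthogonal projection onto $Rang(S)$, the range of $I-P$ lies in $\ker S^{\ast}\subseteq\ker T^{\ast}$, so $T^{\ast}(I-P)=0$, i.e.\ $T^{\ast}=T^{\ast}P$; taking adjoints gives $T=PT$, whence $Rang(T)\subseteq Rang(P)=Rang(S)$.

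The main obstacle is not any individual computation but securing the closed-range structure theory quoted above --- the complementation of $Rang(S)$ and of $Rang(S^{\ast})$ and the existence of the adjointable Moore--Penrose inverse $S^{\dagger}$; once these are in hand, each of the four implications is short. A related subtlety worth flagging is that, in contrast to the Hilbert space setting, in a Hilbert $C^{\ast}$-module one need not have $\overline{Rang(T)}=(\ker T^{\ast})^{\perp}$, so the closedness assumption is placed on $Rang(S)$ precisely so that, in $(iii)\Rightarrow(i)$, a kernel inclusion can be converted back into a genuine range inclusion $Rang(T)\subseteq Rang(S)$ rather than merely $\overline{Rang(T)}\subseteq\overline{Rang(S)}$.
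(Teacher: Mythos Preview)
The paper does not prove this lemma: it is quoted as a preliminary result from \cite{Zha} (Zhang) and stated without proof, so there is no ``paper's own proof'' to compare against. Your cycle $(i)\Rightarrow(iv)\Rightarrow(ii)\Rightarrow(iii)\Rightarrow(i)$ is correct and is essentially the standard argument for Douglas-type factorization theorems in the Hilbert $C^{\ast}$-module setting; the only nontrivial ingredient you invoke, namely that an adjointable operator with closed range has a complemented range and an adjointable Moore--Penrose inverse, is indeed the right tool and is available in the literature (e.g.\ Lance's monograph on Hilbert $C^{\ast}$-modules, or Xu--Sheng), so your proof stands on its own.
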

\begin{lem}\label{l3}\cite{Ara}.
	Let $\mathcal{H}$ be a Hilbert $\mathcal{A}$-module over a $C^{\ast}$-algebra $\mathcal{A}$, and $T\in End_{\mathcal{A}}^{\ast}(\mathcal{H})$ such that $T^{\ast}=T$. The following statements are equivalent:
	\begin{itemize}
		\item [(i)] $T$ is surjective.
		\item[(ii)] There are $m, M>0$ such that $m\|x\|\leq\|Tx\|\leq M\|x\|$, for all $x\in\mathcal{H}$.
		\item[(iii)] There are $m', M'>0$ such that $m'\langle x,x\rangle\leq\langle Tx,Tx\rangle\leq M'\langle x,x\rangle$ for all $x\in\mathcal{H}$.
	\end{itemize}
\end{lem}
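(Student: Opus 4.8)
This lemma is taken from \cite{Ara}; I would prove it via the cycle $(i)\Rightarrow(iii)\Rightarrow(ii)\Rightarrow(i)$, staying inside the toolkit already assembled, namely Lemma \ref{l1} and Lemma \ref{l2}. The standing remark is that $T$ is bounded (being adjointable) and that Lemma \ref{l1} gives $\langle Tx,Tx\rangle\le\|T\|^{2}\langle x,x\rangle$ for all $x$, so the upper estimates in $(ii)$ and $(iii)$ are free (take $M=\|T\|$, $M'=\|T\|^{2}$); only the lower bounds and surjectivity carry content.

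For $(i)\Rightarrow(iii)$ the plan is: first check that a surjective self-adjoint $T$ is injective --- if $Tx=0$ then $\langle x,Ty\rangle=\langle Tx,y\rangle=0$ for every $y$, and surjectivity ($\{Ty:y\in\mathcal{H}\}=\mathcal{H}$) forces $\langle x,x\rangle=0$, i.e. $x=0$. Then $\operatorname{Ran}(T)=\mathcal{H}$ is closed and contains $\operatorname{Ran}(I)$, so Lemma \ref{l2} (the implication $(i)\Rightarrow(iv)$, applied with $S=T$ and the identity $I$ in the role of the lemma's $T$) yields $Q\in End_{\mathcal{A}}^{\ast}(\mathcal{H})$ with $TQ=I$; since $T(QT)=T$ and $T$ is injective, also $QT=I$, so $Q=T^{-1}\in End_{\mathcal{A}}^{\ast}(\mathcal{H})$. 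Applying Lemma \ref{l1} to $Q$ at the vector $Tx$ then gives $\langle x,x\rangle=\langle QTx,QTx\rangle\le\|Q\|^{2}\langle Tx,Tx\rangle$, the lower bound of $(iii)$ with $m'=\|Q\|^{-2}$.

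For $(iii)\Rightarrow(ii)$: apply the $C^{\ast}$-norm to the two inequalities of $(iii)$ and use that $0\le a\le b$ in $\mathcal{A}^{+}$ implies $\|a\|\le\|b\|$, together with $\|\langle x,x\rangle\|=\|x\|^{2}$; this gives $m'\|x\|^{2}\le\|Tx\|^{2}\le M'\|x\|^{2}$, hence $(ii)$ with $m=\sqrt{m'}$, $M=\sqrt{M'}$. For $(ii)\Rightarrow(i)$: the lower bound in $(ii)$ shows $T$ is bounded below, so $\operatorname{Ran}(T)$ is closed (if $Tx_{n}$ is Cauchy then so is $x_{n}$, and its limit is mapped to the limit of $Tx_{n}$). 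Rewriting the bound as $m^{2}\|I^{\ast}x\|^{2}\le\|T^{\ast}x\|^{2}$ and invoking Lemma \ref{l2} in the other direction (the implication $(iii)\Rightarrow(i)$, with $S=T$ of closed range and $I$ in the role of the lemma's $T$) gives $\operatorname{Ran}(I)\subseteq\operatorname{Ran}(T)$, i.e. $T$ is onto.

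The step I expect to be the real pivot is the double use of Lemma \ref{l2}: it is exactly the device that upgrades the scalar estimate ``$T$ bounded below'' to the operator statement ``$I$ factors through $T$'' (whence surjectivity), and runs backwards to produce the bound from surjectivity; self-adjointness enters only to identify $T^{\ast}$ with $T$ --- so that the closed-range hypothesis of Lemma \ref{l2} is met --- and, in $(i)\Rightarrow(iii)$, to get injectivity at no cost. The rest is a one-line computation with Lemma \ref{l1} or with the $C^{\ast}$-norm, and there is no genuine obstacle beyond correctly slotting $I$ and $T$ into Lemma \ref{l2}.
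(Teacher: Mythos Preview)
The paper does not give its own proof of this lemma: it is quoted from \cite{Ara} as a preliminary result and used as a black box. So there is no ``paper's proof'' to compare against; what you have supplied is a self-contained argument in lieu of the citation.

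Your cycle $(i)\Rightarrow(iii)\Rightarrow(ii)\Rightarrow(i)$ is correct. The two invocations of Lemma~\ref{l2} are set up properly (with the identity playing the role of the lemma's $T$ and your $T$ playing the role of $S$, whose range is closed in both places), and the use of Lemma~\ref{l1} to extract the lower inequality in $(iii)$ from the inverse $Q=T^{-1}$ is clean. One minor remark: for $(i)\Rightarrow(iii)$ you could alternatively bypass Lemma~\ref{l2} and go straight through Lemma~\ref{l4}(ii), which for a surjective $T$ (here $T=T^{\ast}$, so $TT^{\ast}=T^{2}$) gives the operator inequality $\|(T^{2})^{-1}\|^{-1}\langle x,x\rangle\le\langle T^{2}x,x\rangle=\langle Tx,Tx\rangle$ in one line; your route via the explicit inverse $Q$ reaches the same constant $\|T^{-1}\|^{-2}$ and is equally valid.
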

\begin{lem}\label{l4}\cite{Deh}.
	Let $\mathcal{H}$ and $\mathcal{K}$ are two Hilbert $\mathcal{A}$-modules and $T\in End^{\ast}(\mathcal{H},\mathcal{K})$. Then:
	\begin{itemize}
		\item [(i)] If $T$ is injective and $T$ has closed range, then the adjointable map $T^{\ast}T$ is invertible and $$\|(T^{\ast}T)^{-1}\|^{-1}\leq T^{\ast}T\leq\|T\|^{2}.$$
		\item  [(ii)]	If $T$ is surjective, then the adjointable map $TT^{\ast}$ is invertible and $$\|(TT^{\ast})^{-1}\|^{-1}\leq TT^{\ast}\leq\|T\|^{2}.$$
	\end{itemize}	
\end{lem}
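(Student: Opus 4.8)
The plan is to reduce both parts to elementary facts about positive invertible elements of the $C^{\ast}$-algebras $End_{\mathcal{A}}^{\ast}(\mathcal{H})$ and $End_{\mathcal{A}}^{\ast}(\mathcal{K})$, together with the standard structural fact that an adjointable operator between Hilbert $\mathcal{A}$-modules has closed range if and only if its adjoint does, in which case $T^{\ast}T$ and $TT^{\ast}$ also have closed, orthogonally complemented ranges.

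For (i), I would first record that $T^{\ast}T$ is self-adjoint and positive, since $\langle T^{\ast}Tx,x\rangle=\langle Tx,Tx\rangle\geq0$ for every $x\in\mathcal{H}$, and that it is injective: if $T^{\ast}Tx=0$ then $\langle Tx,Tx\rangle=\langle T^{\ast}Tx,x\rangle=0$, hence $Tx=0$, and injectivity of $T$ forces $x=0$. Since $T$ has closed range, so does $T^{\ast}T$; being self-adjoint with closed (hence orthogonally complemented) range we get $\mathcal{H}=\ker(T^{\ast}T)\oplus Rang(T^{\ast}T)$, and as $\ker(T^{\ast}T)=\{0\}$ this gives $Rang(T^{\ast}T)=\mathcal{H}$, so $T^{\ast}T$ is bijective and therefore invertible in $End_{\mathcal{A}}^{\ast}(\mathcal{H})$. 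The upper estimate then follows at once from Lemma \ref{l1}: $\langle T^{\ast}Tx,x\rangle=\langle Tx,Tx\rangle\leq\|T\|^{2}\langle x,x\rangle$ for all $x$, which says $T^{\ast}T\leq\|T\|^{2}$. For the lower estimate I would use that $T^{\ast}T$ is a positive invertible element of the $C^{\ast}$-algebra $End_{\mathcal{A}}^{\ast}(\mathcal{H})$, so that $\sigma(T^{\ast}T)\subseteq[\,\|(T^{\ast}T)^{-1}\|^{-1},\,\|T^{\ast}T\|\,]$, whence $\|(T^{\ast}T)^{-1}\|^{-1}\leq T^{\ast}T$ by the continuous functional calculus.

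For (ii), I would run the mirror argument. If $T$ is surjective then it has closed range, so $T^{\ast}$ has closed range and $\ker T^{\ast}=Rang(T)^{\perp}=\{0\}$. Then $TT^{\ast}$ is self-adjoint, positive, injective (from $TT^{\ast}y=0\Rightarrow\langle T^{\ast}y,T^{\ast}y\rangle=0\Rightarrow T^{\ast}y=0\Rightarrow y=0$) and has closed, orthogonally complemented range, hence is bijective and invertible in $End_{\mathcal{A}}^{\ast}(\mathcal{K})$. The upper bound follows from $\langle TT^{\ast}y,y\rangle=\langle T^{\ast}y,T^{\ast}y\rangle\leq\|T^{\ast}\|^{2}\langle y,y\rangle=\|T\|^{2}\langle y,y\rangle$, and the lower bound $\|(TT^{\ast})^{-1}\|^{-1}\leq TT^{\ast}$ once more from the spectral description of positive invertible elements of $End_{\mathcal{A}}^{\ast}(\mathcal{K})$.

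The one step that is not purely formal, and so the place I expect to spend the most care, is the passage from ``$T$ has closed range'' to ``$T^{\ast}T$ (resp. $TT^{\ast}$) has closed and orthogonally complemented range'': in a general Hilbert $C^{\ast}$-module a closed submodule need not be orthogonally complemented, so this is not automatic. It does hold for images of adjointable operators with closed range, and invoking that standard fact is the essential input; everything else is routine $C^{\ast}$-algebra manipulation, so this is the main obstacle.
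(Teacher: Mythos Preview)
The paper does not prove this lemma; it is quoted verbatim from the reference \cite{Deh} and used as a black box. So there is no ``paper's own proof'' to compare against.

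Your argument is correct and is the standard one. A couple of minor remarks: the key structural input you isolate is indeed the heart of the matter --- for adjointable operators between Hilbert $C^{\ast}$-modules, $T$ has closed range iff $T^{\ast}$ does iff $T^{\ast}T$ does iff $TT^{\ast}$ does, and in that case all of these ranges are orthogonally complemented (this is the Mishchenko--Lance closed range theorem; see e.g.\ Lance, \emph{Hilbert $C^{\ast}$-modules}, Theorem~3.2). Once that is in hand, everything else is, as you say, routine: positivity and injectivity of $T^{\ast}T$ force it to be a bijection, hence boundedly invertible by the open mapping theorem, and then the two-sided spectral estimate $\|(T^{\ast}T)^{-1}\|^{-1}\cdot 1 \leq T^{\ast}T \leq \|T^{\ast}T\|\cdot 1 \leq \|T\|^{2}\cdot 1$ is just the functional calculus for a positive invertible element of a unital $C^{\ast}$-algebra. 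Part (ii) is the mirror image, exactly as you wrote.
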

\begin{defn}
	A sequence $\{\alpha_{i}\}_{i\in\mathbb{J}}\subset\mathbb{R}$ is said to be positively confined if: $$0<\inf_{\mathbb{J}}\alpha_{i}\leq\sup_{\mathbb{J}}\alpha_{i}<\infty.$$
\end{defn}

\section{Characterization of operator frame for $End_{\mathcal{A}}^{\ast}(\mathcal{H})$}
In this section, we give a characterazation of operator frame for $End_{\mathcal{A}}^{\ast}(\mathcal{H})$.
\begin{defn} \cite{Ros}.
	A family of adjointable operators $\{T_{i}\}_{i\in\mathbb{J}}$ on a Hilbert $\mathcal{A}$-module $\mathcal{H}$ is said to be an operator frame for $End_{\mathcal{A}}^{\ast}(\mathcal{H})$, if there exists positive constants $A, B > 0$ such that 
	\begin{equation}\label{eq3}
	A\langle x,x\rangle\leq\sum_{i\in\mathbb{J}}\langle T_{i}x,T_{i}x\rangle\leq B\langle x,x\rangle, \forall x\in\mathcal{H}.
	\end{equation}
	The numbers $A$ and $B$ are called lower and upper bound of the operator frame, respectively. If $A=B=\lambda$, the operator frame is $\lambda$-tight. If $A = B = 1$, it is called a normalized tight operator frame or a Parseval operator frame. If only upper inequality of \eqref{eq3} hold, then $\{T_{i}\}_{i\in\mathbb{J}}$ is called an operator Bessel sequence for $End_{\mathcal{A}}^{\ast}(\mathcal{H})$. The operator frame is standard if for every $x\in\mathcal{H}$, the sum in \eqref{eq3} converges in norm.
\end{defn}
Let $\{T_{i}\}_{i\in\mathbb{J}}$ be an operator frame for $End_{\mathcal{A}}^{\ast}(\mathcal{H})$. Define an operator 
$$R:\mathcal{H}\rightarrow l^{2}(\mathcal{H})\;\;by\;\; Rx=\{T_{i}x\}_{i\in\mathbb{J}}, \forall x\in\mathcal{H}.$$
The operator $R$ is called the analysis operator of the operator frame $ \{T_{i}\}_{i\in\mathbb{J}} $.\\
The adjoint of the  analysis operator $R$, $$R^{\ast}(\{x_{i}\}_{i\in\mathbb{J}}):l^{2}(\mathcal{H})\rightarrow\mathcal{H}$$ is defined by $$R^{\ast}(\{x_{i}\}_{i\in\mathbb{J}})=\sum_{i\in\mathbb{J}}T_{i}^{\ast}x_{i}, \forall\{x_{i}\}_{i\in\mathbb{J}}\in l^{2}(\mathcal{H}).$$
The operator $R^{\ast}$ is called the synthesis operator of the operator frame $ \{T_{i}\}_{i\in\mathbb{J}} $.\\
By composing $R$ and $R^{\ast}$, the frame operator $S_{T}:\mathcal{H}\rightarrow\mathcal{H}$ for the operator frame is given by $$	S_{T}(x)=R^{\ast}Rx=\sum_{i\in\mathbb{J}}T_{i}^{\ast}T_{i}x$$
\begin{thm}
	Let $\{T_{i}\}_{i\in\mathbb{J}}$ be a family of adjointable operators  on a Hilbert $\mathcal{A}$-module $\mathcal{H}$. Suppose that
	$\sum_{i\in\mathbb{J}}\langle T_{i}x,T_{i}x\rangle$ converge in norm for all $x\in\mathcal{H}$. Then $\{T_{i}\}_{i\in\mathbb{J}}$ is an operator frame for $End_{\mathcal{A}}^{\ast}(\mathcal{H})$ if and only if there exist positive constants $A, B > 0$ such that
	\begin{equation}\label{equa2}
	A\|x\|^{2}\leq\|\sum_{i\in\mathbb{J}}\langle T_{i}x,T_{i}x\rangle\|\leq B\|x\|^{2}, \forall x\in\mathcal{H}.
	\end{equation} 
\end{thm}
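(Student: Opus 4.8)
The plan is to express both conditions through the analysis operator $R\colon\mathcal{H}\to l^{2}(\mathcal{H})$, $Rx=\{T_{i}x\}_{i\in\mathbb{J}}$, noting that $\langle Rx,Rx\rangle=\sum_{i\in\mathbb{J}}\langle T_{i}x,T_{i}x\rangle$ and $\|\langle x,x\rangle\|=\|x\|^{2}$. The forward implication is immediate: if \eqref{eq3} holds, then $0\leq A\langle x,x\rangle\leq\sum_{i\in\mathbb{J}}\langle T_{i}x,T_{i}x\rangle\leq B\langle x,x\rangle$ in $\mathcal{A}^{+}$, and since $0\leq a\leq b$ implies $\|a\|\leq\|b\|$ in a $C^{\ast}$-algebra, applying $\|\cdot\|$ yields \eqref{equa2}.

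For the converse, assume \eqref{equa2}. The upper estimate gives $\|Rx\|^{2}=\|\sum_{i\in\mathbb{J}}\langle T_{i}x,T_{i}x\rangle\|\leq B\|x\|^{2}$, so $R$ is bounded with $\|R\|\leq\sqrt{B}$; moreover $R$ is adjointable, since its formal adjoint $\{y_{i}\}_{i\in\mathbb{J}}\mapsto\sum_{i\in\mathbb{J}}T_{i}^{\ast}y_{i}$ is well defined on finitely supported sequences, is bounded there by $\|R\|$, and hence extends to all of $l^{2}(\mathcal{H})$. Applying Lemma \ref{l1} to $R$ then gives $\sum_{i\in\mathbb{J}}\langle T_{i}x,T_{i}x\rangle=\langle Rx,Rx\rangle\leq\|R\|^{2}\langle x,x\rangle\leq B\langle x,x\rangle$, the upper bound of \eqref{eq3}.

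For the lower bound, the lower estimate of \eqref{equa2} reads $\sqrt{A}\,\|x\|\leq\|Rx\|$ for all $x\in\mathcal{H}$, which forces $R$ to be injective and to have closed range (a Cauchy sequence in $Rang(R)$ pulls back to a Cauchy one in $\mathcal{H}$). Hence Lemma \ref{l4}(i) applies to $R$ and the frame operator $S_{T}=R^{\ast}R=\sum_{i\in\mathbb{J}}T_{i}^{\ast}T_{i}$ is invertible with $\|S_{T}^{-1}\|^{-1}\leq S_{T}\leq\|R\|^{2}$; in particular $\sum_{i\in\mathbb{J}}\langle T_{i}x,T_{i}x\rangle=\langle S_{T}x,x\rangle\geq\|S_{T}^{-1}\|^{-1}\langle x,x\rangle$, so $A'=\|S_{T}^{-1}\|^{-1}>0$ serves as a lower operator-frame bound, and $\{T_{i}\}_{i\in\mathbb{J}}$ is an operator frame for $End_{\mathcal{A}}^{\ast}(\mathcal{H})$. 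The only genuinely delicate step is this last one: in a Hilbert $C^{\ast}$-module a norm bound does not by itself upgrade to an order bound, and it is precisely the injectivity and closed range of $R$ (feeding Lemma \ref{l4}(i)) that make the passage from $\sqrt{A}\,\|x\|\leq\|Rx\|$ to $A'\langle x,x\rangle\leq\langle Rx,Rx\rangle$ legitimate; verifying adjointability of $R$ is the only other point requiring attention, and it is routine given the Bessel-type upper bound.
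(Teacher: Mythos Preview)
Your proof is correct, and the forward implication matches the paper's. The converse, however, proceeds by a genuinely different route. The paper forms the frame operator $S_{T}=R^{\ast}R$, takes its positive square root $S_{T}^{1/2}$, rewrites \eqref{equa2} as $\sqrt{A}\,\|x\|\leq\|S_{T}^{1/2}x\|\leq\sqrt{B}\,\|x\|$, and then applies Lemma~\ref{l3} to the self-adjoint operator $S_{T}^{1/2}$ to upgrade both norm bounds simultaneously to the order bounds $m\langle x,x\rangle\leq\langle S_{T}^{1/2}x,S_{T}^{1/2}x\rangle\leq M\langle x,x\rangle$. You instead treat the two inequalities separately: the upper bound via the Paschke inequality (Lemma~\ref{l1}) applied to $R$, and the lower bound by showing $R$ is injective with closed range and invoking Lemma~\ref{l4}(i) to obtain $\|S_{T}^{-1}\|^{-1}\langle x,x\rangle\leq\langle S_{T}x,x\rangle$. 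Both arguments are valid; the paper's is more economical (a single lemma handles both directions and no closed-range verification is needed), while yours is more explicit about the adjointability of $R$---a point the paper glosses over---and avoids the square root altogether. One minor caveat: Lemma~\ref{l1} as stated in the paper is for endomorphisms $T\in End_{\mathcal{A}}^{\ast}(\mathcal{H})$, whereas you apply it to $R:\mathcal{H}\to l^{2}(\mathcal{H})$; the inequality does hold for adjointable maps between distinct Hilbert $C^{\ast}$-modules (this is Paschke's original result), but strictly speaking you are invoking a mild extension of the lemma as quoted.
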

\begin{proof}
	Suppose that $\{T_{i}\}_{i\in\mathbb{J}}$ is an operator frame for $End_{\mathcal{A}}^{\ast}(\mathcal{H})$. Since for any $x\in\mathcal{H}$, there is $\langle x,x\rangle\geq0$, combined with the definition of an operator frame we know that \eqref{equa2} holds.\\
	Now suppose that \eqref{equa2} holds. We know that the frame operator $S_{T}$ is positive, self-adjoint and inversible, hence $$\langle S_{T}^{\frac{1}{2}}x,S_{T}^{\frac{1}{2}}x\rangle=\langle S_{T}x,x\rangle=\sum_{i\in\mathbb{J}}\langle T_{i}x,T_{i}x\rangle.$$
	So we have $\sqrt{A}\|x\|\leq\|S_{T}^{\frac{1}{2}}x\|\leq\sqrt{B}\|x\|$ for any $x\in\mathcal{H}$. According to Lemma \ref{l3}, there are constants $m, M>0$ such that
	$$m\langle x,x\rangle\leq\langle S_{T}^{\frac{1}{2}}x,S_{T}^{\frac{1}{2}}x\rangle=\sum_{i\in\mathbb{J}}\langle T_{i}x,T_{i}x\rangle\leq M\langle x,x\rangle,$$
	which implies that $\{T_{i}\}_{i\in\mathbb{J}}$ is an operator frame for $End_{\mathcal{A}}^{\ast}(\mathcal{H})$.
\end{proof}

\section{perturbation and stability of operator frames for $End_{\mathcal{A}}^{\ast}(\mathcal{H})$}
The theory of perturbation is a very important tool in many area of applied mathematics. In this section we study the stability of operator frame under small perturbations. we begin with the following theorems.
\begin{thm}
	Let $\{T_{i}\}_{i\in\mathbb{J}}$ be an operator frame for $End_{\mathcal{A}}^{\ast}(\mathcal{H})$ with bound $A$ and $B$. If $\{R_{i}\}_{i\in\mathbb{J}}\in End_{\mathcal{A}}^{\ast}(\mathcal{H})$ is an operator Bessel sequence with a bound $M<A$, then $\{T_{i}\pm R_{i}\}_{i\in\mathbb{J}}$ is an operator frame for $End_{\mathcal{A}}^{\ast}(\mathcal{H})$.
\end{thm}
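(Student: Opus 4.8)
The plan is to sandwich $\sum_{i\in\mathbb{J}}\langle(T_{i}\pm R_{i})x,(T_{i}\pm R_{i})x\rangle$ between two positive multiples of $\langle x,x\rangle$, using the frame bounds $A,B$ of $\{T_{i}\}_{i\in\mathbb{J}}$ and the Bessel bound $M$ of $\{R_{i}\}_{i\in\mathbb{J}}$. Both estimates come from a Cauchy--Schwarz type inequality in the Hilbert $\mathcal{A}$-module carrying a free parameter, and the hypothesis $M<A$ is exactly what leaves room to choose that parameter on the lower side.

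For the upper bound I would first record that in any Hilbert $\mathcal{A}$-module one has $\langle a\pm b,a\pm b\rangle\leq 2\langle a,a\rangle+2\langle b,b\rangle$, which follows from expanding $\langle a\mp b,a\mp b\rangle\geq 0$, i.e. $\pm(\langle a,b\rangle+\langle b,a\rangle)\leq\langle a,a\rangle+\langle b,b\rangle$. Taking $a=T_{i}x$, $b=R_{i}x$ and summing over $i\in\mathbb{J}$ gives
$$\sum_{i\in\mathbb{J}}\langle(T_{i}\pm R_{i})x,(T_{i}\pm R_{i})x\rangle\leq 2\sum_{i\in\mathbb{J}}\langle T_{i}x,T_{i}x\rangle+2\sum_{i\in\mathbb{J}}\langle R_{i}x,R_{i}x\rangle\leq 2(B+M)\langle x,x\rangle,$$
and the same bound applied to finite tails shows this series is norm-Cauchy, hence convergent, so the perturbed family is again standard.

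For the lower bound I would use, for a parameter $\lambda\in(0,1)$, the inequality $\left\langle\sqrt{\lambda}\,a+\frac{1}{\sqrt{\lambda}}\,b,\ \sqrt{\lambda}\,a+\frac{1}{\sqrt{\lambda}}\,b\right\rangle\geq 0$, which rearranges to $\langle a+b,a+b\rangle\geq(1-\lambda)\langle a,a\rangle+\left(1-\frac{1}{\lambda}\right)\langle b,b\rangle$. With $a=T_{i}x$ and $b=\pm R_{i}x$ (so that $\langle b,b\rangle=\langle R_{i}x,R_{i}x\rangle$ in both cases), summing over $i$, using $\sum_{i}\langle T_{i}x,T_{i}x\rangle\geq A\langle x,x\rangle$ and --- since the coefficient $1-\frac{1}{\lambda}$ is negative --- $\left(1-\frac{1}{\lambda}\right)\sum_{i}\langle R_{i}x,R_{i}x\rangle\geq\left(1-\frac{1}{\lambda}\right)M\langle x,x\rangle$, I obtain
$$\sum_{i\in\mathbb{J}}\langle(T_{i}\pm R_{i})x,(T_{i}\pm R_{i})x\rangle\geq(1-\lambda)A\langle x,x\rangle+\left(1-\frac{1}{\lambda}\right)M\langle x,x\rangle=(1-\lambda)\left(A-\frac{M}{\lambda}\right)\langle x,x\rangle.$$
Since $M<A$ we have $M/A<1$, so fixing any $\lambda\in(M/A,1)$ makes $C:=(1-\lambda)\left(A-M/\lambda\right)>0$; together with the upper estimate this shows $\{T_{i}\pm R_{i}\}_{i\in\mathbb{J}}$ is an operator frame for $End_{\mathcal{A}}^{\ast}(\mathcal{H})$ with bounds $C$ and $2(B+M)$.

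The only real obstacle is the lower bound: the perturbation term must be absorbed without destroying positivity, and the gap between $M$ and $A$ is precisely the slack that makes the choice of $\lambda$ possible --- if $M=A$ the constant degenerates to $0$. An alternative, parameter-free route for the lower bound is to pass to norms via the analysis operators $R_{T},R_{R}\colon\mathcal{H}\to l^{2}(\mathcal{H})$: then $\left\|\sum_{i}\langle(T_{i}\pm R_{i})x,(T_{i}\pm R_{i})x\rangle\right\|^{1/2}=\|R_{T}x\pm R_{R}x\|\geq\|R_{T}x\|-\|R_{R}x\|\geq(\sqrt{A}-\sqrt{M})\|x\|$, and one then invokes the characterization of operator frames established in the previous section to recover the inequality \eqref{eq3}; in this form too the hypothesis $M<A$ is exactly what is used.
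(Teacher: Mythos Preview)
Your argument is correct, but it differs from the paper's. The paper works entirely with norms in $l^{2}(\mathcal{H})$: using the triangle and reverse triangle inequalities for $\|\{(T_{i}\pm R_{i})x\}\|$ it obtains
\[
(\sqrt{A}-\sqrt{M})^{2}\|x\|^{2}\leq\Bigl\|\sum_{i\in\mathbb{J}}\langle(T_{i}\pm R_{i})x,(T_{i}\pm R_{i})x\rangle\Bigr\|\leq(\sqrt{B}+\sqrt{M})^{2}\|x\|^{2},
\]
and then invokes the norm characterization of operator frames (Theorem~2.2) to conclude. This is precisely the ``alternative, parameter-free route'' you sketch at the end. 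Your primary argument instead stays at the level of the $\mathcal{A}$-valued inner product and produces the defining inequality \eqref{eq3} directly, so it does not need Theorem~2.2 at all; that is a genuine simplification. The price is that your stated bounds $2(B+M)$ and $(1-\lambda)(A-M/\lambda)$ are not the sharpest: if you optimize over $\lambda$ by taking $\lambda=\sqrt{M/A}$, your lower constant becomes exactly $(\sqrt{A}-\sqrt{M})^{2}$, matching the paper's, and a parametrized version of your upper estimate (using $\langle a\pm b,a\pm b\rangle\leq(1+\lambda)\langle a,a\rangle+(1+1/\lambda)\langle b,b\rangle$ with $\lambda=\sqrt{M/B}$) would likewise recover $(\sqrt{B}+\sqrt{M})^{2}$.
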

\begin{proof}
	We only prove the case that $\{T_{i}+R_{i}\}_{i\in\mathbb{J}}$ is an operator frame for $End_{\mathcal{A}}^{\ast}(\mathcal{H})$. The other case is similar.\\
	For any $x\in\mathcal{H}$, we have
	\begin{align}\label{31}
	\|\sum_{i\in\mathbb{J}}\langle(T_{i}+R_{i})x,(T_{i}+R_{i})x\rangle\|^{\frac{1}{2}}&=\|\{(T_{i}+R_{i})x\}_{i\in\mathbb{J}}\|\leq\|\{T_{i}x\}_{i\in\mathbb{J}}\|+\|\{R_{i}x\}_{i\in\mathbb{J}}\|\notag\\
	&=\|\sum_{i\in\mathbb{J}}\langle T_{i}x,T_{i}x\rangle\|^{\frac{1}{2}}+\|\sum_{i\in\mathbb{J}}\langle R_{i}x,R_{i}x\rangle\|^{\frac{1}{2}}\notag\\
	&\leq\sqrt{B}\|x\|+\sqrt{M}\|x\|=(\sqrt{B}+\sqrt{M})\|x\|.
	\end{align}
	and
	\begin{align}\label{32}
	\|\sum_{i\in\mathbb{J}}\langle(T_{i}+R_{i})x,(T_{i}+R_{i})x\rangle\|^{\frac{1}{2}}&=\|\{(T_{i}+R_{i})x\}_{i\in\mathbb{J}}\|\geq\|\{T_{i}x\}_{i\in\mathbb{J}}\|-\|\{R_{i}x\}_{i\in\mathbb{J}}\|\notag\\
	&=\|\sum_{i\in\mathbb{J}}\langle T_{i}x,T_{i}x\rangle\|^{\frac{1}{2}}-\|\sum_{i\in\mathbb{J}}\langle R_{i}x,R_{i}x\rangle\|^{\frac{1}{2}}\notag\\
	&\geq\sqrt{A}\|x\|-\sqrt{M}\|x\|=(\sqrt{A}-\sqrt{M})\|x\|.
	\end{align}
	Of \eqref{31} and \eqref{32} we obtain
	$$(\sqrt{A}-\sqrt{M})^{2}\|x\|^{2}\leq\|\sum_{i\in\mathbb{J}}\langle(T_{i}+R_{i})x,(T_{i}+R_{i})x\rangle\|\leq(\sqrt{B}+\sqrt{M})^{2}\|x\|^{2}.$$
	so we have $\{T_{i}+R_{i}\}_{i\in\mathbb{J}}$ is an operator frame for $End_{\mathcal{A}}^{\ast}(\mathcal{H})$.
\end{proof}
\begin{thm}
	Let $\{T_{i}\}_{i\in\mathbb{J}}$ be an operator frame for $End_{\mathcal{A}}^{\ast}(\mathcal{H})$ with bound $A$ and $B$. Let $\{R_{i}\}_{i\in\mathbb{J}}\in End_{\mathcal{A}}^{\ast}(\mathcal{H})$. Then the following statements are equivalent:
	\begin{itemize}
		\item [(i)] $\{R_{i}\}_{i\in\mathbb{J}}$ is an operator frame for $End_{\mathcal{A}}^{\ast}(\mathcal{H})$.
		\item[(ii)] There exists a constant $M>0$, such that for all $x\in\mathcal{H}$, we have
		\begin{equation}\label{eq33}
		\|\sum_{i\in\mathbb{J}}\langle(T_{i}-R_{i})x,(T_{i}-R_{i})x\rangle\|\leq M\min(\|\sum_{i\in\mathbb{J}}\langle T_{i}x,T_{i}x\rangle\|,\|\sum_{i\in\mathbb{J}}\langle R_{i}x,R_{i}x\rangle\|).
		\end{equation}
	\end{itemize} 
\end{thm}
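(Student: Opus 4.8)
The plan is to prove the two implications separately, using the norm characterization of operator frames (the first theorem in Section 2) throughout, since condition \eqref{eq33} is stated in terms of norms of the frame-type sums rather than $\mathcal{A}$-valued inequalities.

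\emph{Proof that (ii) implies (i).} Assume \eqref{eq33} holds with constant $M>0$. First I would use \eqref{eq33} together with the triangle inequality in $l^2(\mathcal{H})$ (writing $\|\{R_ix\}\| = \|\{T_ix - (T_i-R_i)x\}\|$) to bound $\|\sum_i\langle R_ix,R_ix\rangle\|^{1/2}$ above by $\|\sum_i\langle T_ix,T_ix\rangle\|^{1/2} + \sqrt{M}\,\|\sum_i\langle T_ix,T_ix\rangle\|^{1/2} = (1+\sqrt{M})\sqrt{B}\,\|x\|$, which gives the Bessel (upper) bound for $\{R_i\}$ and in particular shows the sum $\sum_i\langle R_ix,R_ix\rangle$ converges in norm. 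For the lower bound, I would again use the reverse triangle inequality: $\|\{R_ix\}\| \geq \|\{T_ix\}\| - \|\{(T_i-R_i)x\}\| \geq \sqrt{A}\,\|x\| - \sqrt{M}\,\|\{R_ix\}\|$ (using the right-hand side of the $\min$ in \eqref{eq33} to bound $\|\{(T_i-R_i)x\}\|$ by $\sqrt{M}\,\|\{R_ix\}\|$). Solving this for $\|\{R_ix\}\|$ yields $\|\{R_ix\}\| \geq \frac{\sqrt{A}}{1+\sqrt{M}}\,\|x\|$, i.e. $\|\sum_i\langle R_ix,R_ix\rangle\| \geq \frac{A}{(1+\sqrt{M})^2}\,\|x\|^2$. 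With both norm bounds in hand, the first theorem of Section~2 upgrades these to the $\mathcal{A}$-valued frame inequalities, so $\{R_i\}_{i\in\mathbb{J}}$ is an operator frame.

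\emph{Proof that (i) implies (ii).} Assume $\{R_i\}_{i\in\mathbb{J}}$ is an operator frame, say with bounds $A'$ and $B'$; also $\{T_i\}$ has bounds $A,B$. By the norm characterization, for all $x$ we have $\|\{T_ix\}\| \leq \sqrt{B}\,\|x\|$ and $\|\{R_ix\}\| \leq \sqrt{B'}\,\|x\|$, and the lower bounds give $\|x\| \leq \frac{1}{\sqrt{A}}\|\{T_ix\}\|$ and $\|x\| \leq \frac{1}{\sqrt{A'}}\|\{R_ix\}\|$. Then by the triangle inequality in $l^2(\mathcal{H})$,
\[
\Big\|\sum_{i\in\mathbb{J}}\langle(T_i-R_i)x,(T_i-R_i)x\rangle\Big\|^{1/2} = \|\{T_ix\} - \{R_ix\}\| \leq \|\{T_ix\}\| + \|\{R_ix\}\|.
\]
Now bound the right-hand side in two ways: $\|\{T_ix\}\| + \|\{R_ix\}\| \leq \|\{T_ix\}\| + \sqrt{B'}\,\|x\| \leq \big(1 + \sqrt{B'/A}\big)\|\{T_ix\}\|$, and symmetrically $\|\{T_ix\}\| + \|\{R_ix\}\| \leq \big(1+\sqrt{B/A'}\big)\|\{R_ix\}\|$. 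Squaring, this gives \eqref{eq33} with $M = \max\big\{(1+\sqrt{B'/A})^2,\ (1+\sqrt{B/A'})^2\big\}$.

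\emph{Main obstacle.} The routine parts are the triangle-inequality manipulations; the one point needing care is the lower-bound step in (ii)$\Rightarrow$(i), where $\|\{(T_i-R_i)x\}\|$ must be controlled by $\sqrt{M}\,\|\{R_ix\}\|$ rather than by $\sqrt{M}\,\|\{T_ix\}\|$ — i.e. one must use the correct branch of the $\min$ in \eqref{eq33} so that, after the reverse triangle inequality, the unknown quantity $\|\{R_ix\}\|$ can be collected on the left and solved for. A secondary subtlety is justifying norm-convergence of $\sum_i\langle R_ix,R_ix\rangle$ in the (ii)$\Rightarrow$(i) direction before invoking the Section~2 characterization; this follows from the upper estimate derived first, since a norm-Cauchy tail estimate for $\{R_ix\}$ in $l^2(\mathcal{H})$ is inherited from that of $\{T_ix\}$ via \eqref{eq33}.
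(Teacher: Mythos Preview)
Your proof is correct and follows essentially the same route as the paper: triangle and reverse triangle inequalities in $l^{2}(\mathcal{H})$, using the appropriate branch of the $\min$ for each direction, then invoking the norm characterization of operator frames. Your choice $M=\max\{(1+\sqrt{B'/A})^{2},(1+\sqrt{B/A'})^{2}\}$ in (i)$\Rightarrow$(ii) is in fact more accurate than the paper's stated value (the paper writes $M=\min\{1+\sqrt{D/A},1+\sqrt{B/C}\}$, which omits both the squaring and takes $\min$ where $\max$ is needed), and your remark on norm convergence before applying the Section~2 theorem is a point the paper glosses over.
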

\begin{proof}
	First, let $\{R_{i}\}_{i\in\mathbb{J}}\in End_{\mathcal{A}}^{\ast}(\mathcal{H})$ be an operator frame with bound $C$ and $D$. Then for any $x\in\mathcal{H}$, we have
	\begin{align*}
	\|\sum_{i\in\mathbb{J}}\langle(T_{i}-R_{i})x,(T_{i}-R_{i})x\rangle\|^{\frac{1}{2}}&=\|\{(T_{i}-R_{i})x\}_{i\in\mathbb{J}}\|\leq\|\{T_{i}x\}_{i\in\mathbb{J}}\|+\|\{R_{i}x\}_{i\in\mathbb{J}}\|\\
	&=\|\sum_{i\in\mathbb{J}}\langle T_{i}x,T_{i}x\rangle\|^{\frac{1}{2}}+\|\sum_{i\in\mathbb{J}}\langle R_{i}x,R_{i}x\rangle\|^{\frac{1}{2}}\\
	&\leq\|\sum_{i\in\mathbb{J}}\langle T_{i}x,T_{i}x\rangle\|^{\frac{1}{2}}+\sqrt{D}\|x\|\\
	&\leq\|\sum_{i\in\mathbb{J}}\langle T_{i}x,T_{i}x\rangle\|^{\frac{1}{2}}+\sqrt{\frac{D}{A}}\|\sum_{i\in\mathbb{J}}\langle T_{i}x,T_{i}x\rangle\|^{\frac{1}{2}}\\
	&=(1+\sqrt{\frac{D}{A}})\|\sum_{i\in\mathbb{J}}\langle T_{i}x,T_{i}x\rangle\|^{\frac{1}{2}}
	\end{align*}
	Similary we can obtain
	\begin{equation*}
	\|\sum_{i\in\mathbb{J}}\langle(T_{i}-R_{i})x,(T_{i}-R_{i})x\rangle\|^{\frac{1}{2}}\leq(1+\sqrt{\frac{B}{C}})\|\sum_{i\in\mathbb{J}}\langle R_{i}x,R_{i}x\rangle\|^{\frac{1}{2}}
	\end{equation*}
	Let $M=\min\{1+\sqrt{\frac{D}{A}},1+\sqrt{\frac{B}{C}}\}$, then \eqref{eq33} holds.\\
	Next we suppose that \eqref{eq33} holds. For evry $x\in\mathcal{H}$, we have
	\begin{align}\label{eq34}
	\sqrt{A}\|x\|\leq\|\sum_{i\in\mathbb{J}}\langle T_{i}x,T_{i}x\rangle\|^{\frac{1}{2}}&=\|\{T_{i}x\}_{i\in\mathbb{J}}\|\leq\|\{(T_{i}-R_{i})x\}_{i\mathbb{J}}\|+\|\{R_{i}x\}_{i\in\mathbb{J}}\|\notag\\
	&=\|\sum_{i\in\mathbb{J}}\langle(T_{i}-R_{i})x,(T_{i}-R_{i})x\rangle\|^{\frac{1}{2}}+\|\sum_{i\in\mathbb{J}}\langle R_{i}x,R_{i}x\rangle\|^{\frac{1}{2}}\notag\\
	&\leq\sqrt{M}\|\sum_{i\in\mathbb{J}}\langle R_{i}x,R_{i}x\rangle\|^{\frac{1}{2}}+\|\sum_{i\in\mathbb{J}}\langle R_{i}x,R_{i}x\rangle\|^{\frac{1}{2}}\notag\\
	&=(\sqrt{M}+1)\|\sum_{i\in\mathbb{J}}\langle R_{i}x,R_{i}x\rangle\|^{\frac{1}{2}}.
	\end{align}
	Also we have
	\begin{align}\label{eq35}
	\|\sum_{i\in\mathbb{J}}\langle R_{i}x,R_{i}x\rangle\|^{\frac{1}{2}}&=\|\{R_{i}x\}_{i\in\mathbb{J}}\|\leq\|\{T_{i}x\}_{i\in\mathbb{J}}\|+\|\{(T_{i}-R_{i})x\}_{i\mathbb{J}}\|\notag\\
	&=\|\sum_{i\in\mathbb{J}}\langle T_{i}x,T_{i}x\rangle\|^{\frac{1}{2}}+\|\sum_{i\in\mathbb{J}}\langle(T_{i}-R_{i})x,(T_{i}-R_{i})x\rangle\|^{\frac{1}{2}}\notag\\
	&\leq\|\sum_{i\in\mathbb{J}}\langle T_{i}x,T_{i}x\rangle\|^{\frac{1}{2}}+\sqrt{M}\|\sum_{i\in\mathbb{J}}\langle T_{i}x,T_{i}x\rangle\|^{\frac{1}{2}}\notag\\
	&=(\sqrt{M}+1)\sum_{i\in\mathbb{J}}\langle T_{i}x,T_{i}x\rangle\|^{\frac{1}{2}}\notag\\
	&\leq\sqrt{B}(\sqrt{M}+1)\|x\|.
	\end{align}
	Of \eqref{eq34} and \eqref{eq35} we obtain
	\begin{equation*}
	\frac{A}{(\sqrt{M}+1)^{2}}\|x\|^{2}\leq\|\sum_{i\in\mathbb{J}}\langle R_{i}x,R_{i}x\rangle\|\leq B(\sqrt{M}+1)^{2}\|x\|^{2}.
	\end{equation*}
	So we have $\{R_{i}\}_{i\in\mathbb{J}}$ is an operator frame for $End_{\mathcal{A}}^{\ast}(\mathcal{H})$.
\end{proof}
The following theorem give a necessary and sufficient condition for the finite sum of operator frames to be an operator frame.
\begin{thm}
	Let $\{T_{n,i}\}_{i}\subset End_{\mathcal{A}}^{\ast}(\mathcal{H})$, $n=1,2,..,k$, be operator frame for $End_{\mathcal{A}}^{\ast}(\mathcal{H})$ with bound $A_{n}$ and $B_{n}$, let
	$\{\alpha_{n}\}_{n=1}^{k}$ be any scalars. If there exists a constant $\lambda>0$ and some $p\in\{1,2,...,k\}$ such that
	\begin{equation*}
	\lambda\|\{T_{p,i}x\}_{i}\|\leq\|\{\sum_{n=1}^{k}\alpha_{n}T_{n,i}(x)\}_{i}\|, x\in\mathcal{H}.
	\end{equation*}
	Then $\{\sum_{n=1}^{k}\alpha_{n}T_{n,i}\}_{i}$ is an operator frame for $End_{\mathcal{A}}^{\ast}(\mathcal{H})$. And conversely.
\end{thm}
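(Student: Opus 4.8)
The plan is to prove the two implications separately, abbreviating $S_i := \sum_{n=1}^k \alpha_n T_{n,i}$ and working in the Hilbert $\mathcal{A}$-module $l^2(\mathcal{H})$ with its norm $\|\{y_i\}_i\| = \|\sum_i \langle y_i, y_i\rangle\|^{1/2}$, so that the norm characterization of operator frames proved in Section 2 is available. For the forward implication I would first handle the upper (Bessel) bound: since each $\{T_{n,i}\}_i$ is an operator frame with upper bound $B_n$, each $\{T_{n,i}x\}_i$ belongs to $l^2(\mathcal{H})$ with $\|\{T_{n,i}x\}_i\| \le \sqrt{B_n}\,\|x\|$; hence $\{S_ix\}_i = \sum_{n=1}^k \alpha_n\{T_{n,i}x\}_i$ is a finite linear combination of elements of $l^2(\mathcal{H})$ --- so in particular $\sum_i\langle S_ix,S_ix\rangle$ converges in norm --- and the triangle inequality in $l^2(\mathcal{H})$ gives $\|\{S_ix\}_i\| \le \big(\sum_{n=1}^k |\alpha_n|\sqrt{B_n}\big)\|x\|$.

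For the lower bound I would combine the hypothesis $\lambda\|\{T_{p,i}x\}_i\| \le \|\{S_ix\}_i\|$ with the lower frame inequality $\sqrt{A_p}\,\|x\| \le \|\{T_{p,i}x\}_i\|$ of the $p$-th frame to get $\lambda\sqrt{A_p}\,\|x\| \le \|\{S_ix\}_i\|$. Squaring the two estimates then yields
$$\lambda^2 A_p\,\|x\|^2 \le \Big\|\sum_i\langle S_ix,S_ix\rangle\Big\| \le \Big(\sum_{n=1}^k |\alpha_n|\sqrt{B_n}\Big)^2\|x\|^2 \qquad(x\in\mathcal{H}),$$
and the norm characterization from Section 2 shows that $\{S_i\}_i$ is an operator frame for $End_{\mathcal{A}}^{\ast}(\mathcal{H})$.

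For the converse, assume $\{S_i\}_i$ is an operator frame, say with bounds $C$ and $D$, and fix any $p\in\{1,\dots,k\}$. The lower inequality for $\{S_i\}_i$ gives $\sqrt{C}\,\|x\| \le \|\{S_ix\}_i\|$, while the upper frame inequality for $\{T_{p,i}\}_i$ gives $\|\{T_{p,i}x\}_i\| \le \sqrt{B_p}\,\|x\|$; together these give $\|\{T_{p,i}x\}_i\| \le \sqrt{B_p/C}\,\|\{S_ix\}_i\|$, so the stated condition holds with $\lambda = \sqrt{C/B_p} > 0$. No step here is a real obstacle; the only points meriting attention are using the hypothesis in the correct direction (to bound $\|\{T_{p,i}x\}_i\|$, and thereby $\|x\|$, from below in terms of $\|\{S_ix\}_i\|$) and recording the norm convergence of $\sum_i\langle S_ix,S_ix\rangle$ --- immediate from the finite-sum remark --- before invoking the Section 2 characterization.
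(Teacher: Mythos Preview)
Your proof is correct and follows essentially the same route as the paper's: both directions combine the lower frame inequality for $\{T_{p,i}\}_i$ with the hypothesis to obtain the lower bound, use the triangle inequality in $l^2(\mathcal{H})$ together with the upper bounds $B_n$ for the upper bound, and then invoke the norm characterization of Section~2; the converse in both cases pairs the upper bound $B_p$ of $\{T_{p,i}\}_i$ with the lower bound of $\{S_i\}_i$. The only cosmetic differences are that your upper constant $\big(\sum_n|\alpha_n|\sqrt{B_n}\big)^2$ is slightly sharper than the paper's $(\max_n|\alpha_n|)^2\big(\sum_n\sqrt{B_n}\big)^2$, and you make explicit the norm convergence of $\sum_i\langle S_ix,S_ix\rangle$ needed to invoke that characterization.
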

\begin{proof}
	For any $x\in\mathcal{H}$, we have
	\begin{align*}
	\sqrt{A_{p}}\lambda\|\langle x,x\rangle\|^{\frac{1}{2}}&\leq\lambda\|\{T_{p,i}x\}_{i}\|\\
	&\leq\|\{\sum_{n=1}^{k}\alpha_{n}T_{n,i}(x)\}_{i}\|\\
	&\leq\sum_{n=1}^{k}|\alpha_{n}|\|\{T_{n,i}x\}_{i}\|\\
	&\leq\max_{1\leq n\leq k}|\alpha_{n}|\sum_{n=1}^{k}\|\{T_{n,i}x\}_{i}\|\\
	&\leq\max_{1\leq n\leq k}|\alpha_{n}|(\sum_{n=1}^{k}\sqrt{B_{n}})\|\langle x,x\rangle\|^{\frac{1}{2}}.
	\end{align*}
	Then forall $x\in\mathcal{H}$, we have
	\begin{equation*}
	\sqrt{A_{p}}\lambda\|\langle x,x\rangle\|^{\frac{1}{2}}\leq\|\{\sum_{n=1}^{k}\alpha_{n}T_{n,i}(x)\}_{i}\|\leq\max_{1\leq n\leq k}|\alpha_{n}|(\sum_{n=1}^{k}\sqrt{B_{n}})\|\langle x,x\rangle\|^{\frac{1}{2}}.
	\end{equation*}
	So forall $x\in\mathcal{H}$,
	\begin{equation*}
	A_{p}\lambda^{2}\|\langle x,x\rangle\|\leq\|\{\sum_{n=1}^{k}\alpha_{n}T_{n,i}(x)\}_{i}\|^{2}\leq(\max_{1\leq n\leq k}|\alpha_{n}|)^{2}(\sum_{n=1}^{k}\sqrt{B_{n}})^{2}\|\langle x,x\rangle\|.
	\end{equation*}
	Hence $\{\sum_{n=1}^{k}\alpha_{n}T_{n,i}\}_{i}$ is an operator frame for $End_{\mathcal{A}}^{\ast}(\mathcal{H}).$\\
	Conversly, let $\{\sum_{n=1}^{k}\alpha_{n}T_{n,i}\}_{i}$ is an operator frame for $End_{\mathcal{A}}^{\ast}(\mathcal{H})$ with bounds $A$, $B$ and let for any $p\in\{1,2,...,k\}$, $\{T_{p,i}\}_{i}$ be an  an operator frame for $End_{\mathcal{A}}^{\ast}(\mathcal{H})$ with bounds $A_{p}$ and $B_{p}$. Then, for any $x\in\mathcal{H}$, $p\in\{1,2,...,k\}$, we have
	\begin{equation*}
	A_{p}\|\langle x,x\rangle\|\leq\|\{T_{p,i}x\}_{i}\|^{2}\leq B_{p}\|\langle x,x\rangle\|.
	\end{equation*}
	This gives
	\begin{equation*}
	\frac{1}{B_{p}}\|\{T_{p,i}x\}_{i}\|^{2}\leq\|\langle x,x\rangle\|, x\in\mathcal{H}.
	\end{equation*}
	Also, we have
	\begin{equation*}
	A\|\langle x,x\rangle\|\leq\|\{\sum_{n=1}^{k}\alpha_{n}T_{n,i}(x)\}_{i}\|^{2}\leq B\|\langle x,x\rangle\|, x\in\mathcal{H}.
	\end{equation*}
	So,
	\begin{equation*}
	\|\langle x,x\rangle\|\leq\frac{1}{A}\|\{\sum_{n=1}^{k}\alpha_{n}T_{n,i}(x)\}_{i}\|^{2}, x\in\mathcal{H}.
	\end{equation*}
	Hence
	\begin{equation*}
	\frac{A}{B_{p}}\|\{T_{p,i}x\}_{i}\|^{2}\leq\|\{\sum_{n=1}^{k}\alpha_{n}T_{n,i}(x)\}_{i}\|^{2}, x\in\mathcal{H}.
	\end{equation*}
	Then for $\lambda=\frac{A}{B_{p}}$, we have
	\begin{equation*}
	\lambda\|\{T_{p,i}x\}_{i}\|^{2}\leq\|\{\sum_{n=1}^{k}\alpha_{n}T_{n,i}(x)\}_{i}\|^{2}, x\in\mathcal{H}.
	\end{equation*}	
\end{proof}
Next, we give a sufficient condition for the stability of finite sum of operator frames.
\begin{thm}
	For $n=1,2,..,k$, let $\{T_{n,i}\}_{i}\subset End_{\mathcal{A}}^{\ast}(\mathcal{H})$ be an operator frame for $End_{\mathcal{A}}^{\ast}(\mathcal{H})$ with bound $A_{n}$ and $B_{n}$, $\{R_{n,i}\}_{i}\subset End_{\mathcal{A}}^{\ast}(\mathcal{H})$ be any sequence. Let $L:\ell^{2}(\mathcal{H})\rightarrow\ell^{2}(\mathcal{H})$ be a bounded linear operator such that $L(\{\sum_{n=1}^{k}R_{n,i}(x)\}_{i})=\{T_{p,i}(x)\}_{i}$, for some $p\in\{1,2,...,k\}$. If there exists a constant $\lambda>0$ such that
	$$ \|\sum_{i}\langle(T_{n,i}-R_{n,i})x,(T_{n,i}-R_{n,i})x\rangle\|\leq\lambda\|\sum_{i}\langle T_{n,i}x,T_{n,i}x\rangle\|, x\in\mathcal{H}, n=1,2,..,k. $$
	Then $\{\sum_{n=1}^{k}R_{n,i}\}_{i}$ is an operator frame for $End_{\mathcal{A}}^{\ast}(\mathcal{H})$.
\end{thm}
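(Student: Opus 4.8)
The plan is to bound $\|\{\sum_{n=1}^{k}R_{n,i}(x)\}_i\|$ both above and below by a multiple of $\|\langle x,x\rangle\|^{1/2}$, using the operator $L$ to transfer the lower bound of the frame $\{T_{p,i}\}_i$. First I would observe that the perturbation hypothesis, for each fixed $n$, reads in sequence-norm notation as
\[
\|\{(T_{n,i}-R_{n,i})x\}_i\|\le\sqrt{\lambda}\,\|\{T_{n,i}x\}_i\|\le\sqrt{\lambda B_n}\,\|\langle x,x\rangle\|^{1/2},\qquad x\in\mathcal H,
\]
so that by the triangle inequality $\|\{R_{n,i}x\}_i\|\le(1+\sqrt{\lambda})\sqrt{B_n}\,\|\langle x,x\rangle\|^{1/2}$. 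Summing over $n=1,\dots,k$ and using $\|\{\sum_{n=1}^k R_{n,i}x\}_i\|\le\sum_{n=1}^k\|\{R_{n,i}x\}_i\|$ gives the upper (Bessel) bound
\[
\Big\|\Big\{\sum_{n=1}^k R_{n,i}(x)\Big\}_i\Big\|\le(1+\sqrt{\lambda})\Big(\sum_{n=1}^k\sqrt{B_n}\Big)\|\langle x,x\rangle\|^{1/2}.
\]

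For the lower bound I would use $L$. Since $L(\{\sum_{n=1}^k R_{n,i}(x)\}_i)=\{T_{p,i}(x)\}_i$, boundedness of $L$ yields
\[
\|\{T_{p,i}x\}_i\|=\Big\|L\Big(\Big\{\sum_{n=1}^k R_{n,i}(x)\Big\}_i\Big)\Big\|\le\|L\|\,\Big\|\Big\{\sum_{n=1}^k R_{n,i}(x)\Big\}_i\Big\|.
\]
Because $\{T_{p,i}\}_i$ is an operator frame with lower bound $A_p$, we have $\sqrt{A_p}\,\|\langle x,x\rangle\|^{1/2}\le\|\{T_{p,i}x\}_i\|$, hence
\[
\frac{\sqrt{A_p}}{\|L\|}\,\|\langle x,x\rangle\|^{1/2}\le\Big\|\Big\{\sum_{n=1}^k R_{n,i}(x)\Big\}_i\Big\|.
\]
Squaring both inequalities and invoking the characterization in Theorem~2.2 (the $\|\cdot\|$-version of the frame inequality, valid since the middle sum converges in norm — which follows from the Bessel bound just established) shows that $\{\sum_{n=1}^k R_{n,i}\}_i$ is an operator frame for $End_{\mathcal A}^{\ast}(\mathcal H)$ with bounds $A_p/\|L\|^2$ and $(1+\sqrt{\lambda})^2(\sum_{n=1}^k\sqrt{B_n})^2$.

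The main obstacle, and the point requiring care, is the lower bound: one must notice that no perturbation-closeness of $\sum_n R_{n,i}$ to $\sum_n T_{n,i}$ is assumed, and that the only available route to a lower frame bound is through the hypothesized operator $L$ together with the known frame $\{T_{p,i}\}_i$. A secondary subtlety is the implicit need for $\|L\|\neq 0$ (automatic, since $L$ maps onto a sequence $\{T_{p,i}x\}_i$ that is nonzero for $x\neq 0$) and for the norm convergence of $\sum_i\langle(\sum_n R_{n,i})x,(\sum_n R_{n,i})x\rangle$ so that Theorem~2.2 applies; both are handled by the Bessel estimate derived in the first step. Everything else is the same triangle-inequality bookkeeping already used in the preceding theorems.
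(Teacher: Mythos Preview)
Your proof is correct and follows essentially the same route as the paper: the upper bound via the triangle inequality and the perturbation hypothesis on each $\{R_{n,i}\}_i$, the lower bound via $\|\{T_{p,i}x\}_i\|\le\|L\|\,\|\{\sum_n R_{n,i}x\}_i\|$ together with the lower frame bound $A_p$, and the same explicit frame constants $A_p/\|L\|^2$ and $(1+\sqrt{\lambda})^2(\sum_n\sqrt{B_n})^2$. Your added remarks on $\|L\|\neq 0$ and on norm convergence needed to invoke Theorem~2.2 are sound and make explicit what the paper leaves implicit.
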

\begin{proof}
	For any $x\in\mathcal{H}$, we have
	\begin{align*}
	\|\{\sum_{n=1}^{k}R_{n,i}(x)\}_{i}\|&\leq\sum_{n=1}^{k}\|\{R_{n,i}(x)\}_{i}\|\\
	&\leq\sum_{n=1}^{k}(\|\{(T_{n,i}-R_{n,i})x\}_{i}\|+\|\{T_{n,i}(x)\}_{i}\|)\\
	&\leq\sum_{n=1}^{k}(\sqrt{\lambda}\|\{T_{n,i}(x)\}_{i}\|+\|\{T_{n,i}(x)\}_{i}\|)\\
	&=(1+\sqrt{\lambda})\sum_{n=1}^{k}\|\{T_{n,i}(x)\}_{i}\|\\
	&\leq(1+\sqrt{\lambda})(\sum_{n=1}^{k}\sqrt{B_{n}})\|\langle x,x\rangle\|^{\frac{1}{2}}.
	\end{align*}
	Also, for each $x\in\mathcal{H}$, we have
	$$\|L(\{\sum_{n=1}^{k}R_{n,i}(x)\}_{i})\|=\|\{T_{p,i}(x)\}_{i}\|$$
	Therefore, we get
	\begin{align*}
	\sqrt{A_{p}}\|\langle x,x\rangle\|^{\frac{1}{2}}\leq\|\{T_{p,i}x\}_{i}\|&=\|L(\{\sum_{n=1}^{k}R_{n,i}(x)\}_{i})\|\\
	&\leq\|L\|\|\{\sum_{n=1}^{k}R_{n,i}(x)\}_{i}\|, x\in\mathcal{H}
	\end{align*}
	This gives
	$$\frac{\sqrt{A_{p}}}{\|L\|}\|\langle x,x\rangle\|^{\frac{1}{2}}\leq\|\{\sum_{n=1}^{k}R_{n,i}(x)\}_{i}\|, x\in\mathcal{H}$$
	Then
	$$\frac{\sqrt{A_{p}}}{\|L\|}\|\langle x,x\rangle\|^{\frac{1}{2}}\leq\|\{\sum_{n=1}^{k}R_{n,i}(x)\}_{i}\|\leq(1+\sqrt{\lambda})(\sum_{n=1}^{k}\sqrt{B_{n}})\|\langle x,x\rangle\|^{\frac{1}{2}},x\in\mathcal{H}.$$
	So
	$$\frac{A_{p}}{\|L\|^{2}}\|\langle x,x\rangle\|\leq\|\{\sum_{n=1}^{k}R_{n,i}(x)\}_{i}\|^{2}\leq(1+\sqrt{\lambda})^{2}(\sum_{n=1}^{k}\sqrt{B_{n}})^{2}\|\langle x,x\rangle\|,x\in\mathcal{H}.$$
	Hence $\{\sum_{n=1}^{k}R_{n,i}\}_{i}$ is an operator frame for $End_{\mathcal{A}}^{\ast}(\mathcal{H})$.
\end{proof}
\section{Characterization of k-operator frames for $End_{\mathcal{A}}^{\ast}(\mathcal{H})$}
We began this section with the following definition.
\begin{defn} \cite{Ros}.
	Let $K\in End_{\mathcal{A}}^{\ast}(\mathcal{H})$. A family of adjointable operators $\{T_{i}\}_{i\in\mathbb{J}}$ on a Hilbert $\mathcal{A}$-module $\mathcal{H}$ is said to be a $K$-operator frame for $End_{\mathcal{A}}^{\ast}(\mathcal{H})$, if there exists positive constants $A, B > 0$ such that 
	\begin{equation}\label{eq6}
	A\langle K^{\ast}x,K^{\ast}x\rangle\leq\sum_{i\in\mathbb{J}}\langle T_{i}x,T_{i}x\rangle\leq B\langle x,x\rangle, \forall x\in\mathcal{H}.
	\end{equation}
	The numbers $A$ and $B$ are called lower and upper bound of the $K$-operator frame, respectively. If $$A\langle K^{\ast}x,K^{\ast}x\rangle=\sum_{i\in\mathbb{J}}\langle T_{i}x,T_{i}x\rangle,$$ the $K$-operator frame is $A$-tight. If $A=1$, it is called a normalized tight $K$-operator frame or a Parseval $K$-operator frame. The $K$-operator frame is standard if for every $x\in\mathcal{H}$, the sum in \eqref{eq6} converges in norm. 
\end{defn}

Let $\{T_{i}\}_{i\in\mathbb{J}}$ be a $K$-operator frame for $End_{\mathcal{A}}^{\ast}(\mathcal{H})$. Define an operator $$R:\mathcal{H}\rightarrow l^{2}(\mathcal{H})\;\; by\;\; Rx=\{T_{i}x\}_{i\in\mathbb{J}}, \forall x\in\mathcal{H}.$$
The operator $R$ is called the analysis operator of the $K$-operator frame $ \{T_{i}\}_{i\in\mathbb{J}} $.\\
The adjoint of the  analysis operator $R$, $$R^{\ast}(\{x_{i}\}_{i\in\mathbb{J}}):l^{2}(\mathcal{H})\rightarrow\mathcal{H}$$ is defined by $$R^{\ast}(\{x_{i}\}_{i\in\mathbb{J}})=\sum_{i\in\mathbb{J}}T_{i}^{\ast}x_{i}, \forall\{x_{i}\}_{i\in\mathbb{J}}\in l^{2}(\mathcal{H}).$$
The operator $R^{\ast}$ is called the synthesis operator of the $K$-operator frame $ \{T_{i}\}_{i\in\mathbb{J}} $.\\
By composing $R$ and $R^{\ast}$, the frame operator $S_{T}:\mathcal{H}\rightarrow\mathcal{H}$ for the $K$-operator frame is given by $$	S_{T}(x)=R^{\ast}Rx=\sum_{i\in\mathbb{J}}T_{i}^{\ast}T_{i}x.$$
\begin{thm}
	Let $\{T_{i}\}_{i\in\mathbb{J}}$ be a family of adjointable operators  on a Hilbert $\mathcal{A}$-module $\mathcal{H}$. Suppose that
	$\sum_{i\in\mathbb{J}}\langle T_{i}x,T_{i}x\rangle$ converge in norm for all $x\in\mathcal{H}$. Then $\{T_{i}\}_{i\in\mathbb{J}}$ is a $K$-operator frame for $End_{\mathcal{A}}^{\ast}(\mathcal{H})$ if and only if there exist positive constants $A, B > 0$ such that
	\begin{equation}\label{eq42}
	A\|K^{\ast}x\|^{2}\leq\|\sum_{i\in\mathbb{J}}\langle T_{i}x,T_{i}x\rangle\|\leq B\|x\|^{2}, \forall x\in\mathcal{H}.
	\end{equation} 
\end{thm}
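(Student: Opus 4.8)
The plan is to prove the equivalence in two steps, the forward implication being routine and the converse carrying the content. For the ``only if'' direction, assume the defining inequality \eqref{eq6}. Each of $A\langle K^{\ast}x,K^{\ast}x\rangle$, $\sum_{i\in\mathbb{J}}\langle T_{i}x,T_{i}x\rangle$ and $B\langle x,x\rangle$ is a positive element of $\mathcal{A}$, and the norm of a $C^{\ast}$-algebra is monotone on $\mathcal{A}^{+}$; applying it to the chain in \eqref{eq6} and recalling that $\|\langle y,y\rangle\|=\|y\|^{2}$ yields \eqref{eq42} at once, with the same constants $A$ and $B$.

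For the converse I would reproduce, in the present setting, the analysis-operator argument used to prove the corresponding characterization of operator frames. Since $\sum_{i}\langle T_{i}x,T_{i}x\rangle$ converges in norm, the analysis operator $Rx=\{T_{i}x\}_{i\in\mathbb{J}}$ is a well-defined adjointable map $\mathcal{H}\to l^{2}(\mathcal{H})$ with $\langle Rx,Rx\rangle=\sum_{i}\langle T_{i}x,T_{i}x\rangle$, so \eqref{eq42} becomes $\sqrt{A}\,\|K^{\ast}x\|\le\|Rx\|\le\sqrt{B}\,\|x\|$ for all $x\in\mathcal{H}$. The right-hand inequality gives $\|R\|\le\sqrt{B}$, hence the Bessel bound $\sum_{i}\langle T_{i}x,T_{i}x\rangle=\langle Rx,Rx\rangle\le\|R\|^{2}\langle x,x\rangle\le B\langle x,x\rangle$ by the Paschke-type estimate of Lemma \ref{l1}. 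For the lower $K$-bound, I would use that $R^{\ast}$ is the synthesis operator, with $(R^{\ast})^{\ast}=R$ and $R^{\ast}(R^{\ast})^{\ast}=S_{T}$: the left-hand inequality $\sqrt{A}\,\|K^{\ast}x\|\le\|Rx\|$ is exactly statement (iii) of Lemma \ref{l2} with $T=K$ and $S=R^{\ast}$, so Lemma \ref{l2} supplies $\lambda>0$ with $\lambda KK^{\ast}\le S_{T}$. Pairing this operator inequality with $x$ gives $\lambda\langle K^{\ast}x,K^{\ast}x\rangle\le\langle S_{T}x,x\rangle=\sum_{i}\langle T_{i}x,T_{i}x\rangle$, and together with the Bessel bound this shows $\{T_{i}\}_{i\in\mathbb{J}}$ is a $K$-operator frame with bounds $\lambda$ and $B$.

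The step I expect to be the main obstacle is the appeal to Lemma \ref{l2}, which is valid only when $Rang(S)=Rang(R^{\ast})$ (equivalently, $Rang(S_{T}^{1/2})$) is closed --- a property that does not follow from the norm estimates \eqref{eq42} on their own. I would resolve this by invoking the convention, standard in this circle of results, that the frame operator $S_{T}$ has closed range, so that $R^{\ast}$ does as well; with that hypothesis in place Lemma \ref{l2} applies verbatim and the argument above goes through unchanged. One could also phrase the converse so that closedness of $Rang(S_{T})$ is part of the hypothesis, which is the cleanest fix.
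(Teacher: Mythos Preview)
Your argument parallels the paper's: both rewrite \eqref{eq42} as $\sqrt{A}\,\|K^{\ast}x\|\le\|Rx\|=\|S_{T}^{1/2}x\|\le\sqrt{B}\,\|x\|$, deduce the Bessel operator inequality from boundedness (you via Lemma~\ref{l1}, the paper by citing Lemmas~\ref{l2} and~\ref{l3}), and extract the lower $K$-bound from Lemma~\ref{l2}. One small technical point: Lemma~\ref{l2} as stated here is for $T,S\in End_{\mathcal{A}}^{\ast}(\mathcal{H})$, so taking $S=R^{\ast}\colon l^{2}(\mathcal{H})\to\mathcal{H}$ does not literally fit; the paper sidesteps this by working with $S_{T}^{1/2}\in End_{\mathcal{A}}^{\ast}(\mathcal{H})$ instead, which carries the same norm data. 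The closed-range hypothesis you flag is genuine and is equally present in the paper's proof, where it is absorbed into the unjustified assertion that $S_{T}$ is ``inversible.''
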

\begin{proof}
	Suppose that $\{T_{i}\}_{i\in\mathbb{J}}$ is a $K$-operator frame for $End_{\mathcal{A}}^{\ast}(\mathcal{H})$. Since for any $x\in\mathcal{H}$, there is $\langle K^{\ast}x,K^{\ast}x\rangle\geq0$, combined with the definition of a $K$-operator frame we know that \eqref{eq42} holds.\\
	Now suppose that \eqref{eq42} holds. We know that the frame operator $S_{T}$ is positive, self-adjoint and inversible, hence $$\langle S_{T}^{\frac{1}{2}}x,S_{T}^{\frac{1}{2}}x\rangle=\langle S_{T}x,x\rangle=\sum_{i\in\mathbb{J}}\langle T_{i}x,T_{i}x\rangle.$$
	So we have $\sqrt{A}\|K^{\ast}x\|\leq\|S_{T}^{\frac{1}{2}}x\|\leq\sqrt{B}\|x\|$ for any $x\in\mathcal{H}$. According to Lemmas \ref{l2} and \ref{l3}, there are constants $m, M>0$ such that
	$$m\langle K^{\ast}x,K^{\ast}x\rangle\leq\langle S_{T}^{\frac{1}{2}}x,S_{T}^{\frac{1}{2}}x\rangle=\sum_{i\in\mathbb{J}}\langle T_{i}x,T_{i}x\rangle\leq M\langle x,x\rangle,$$
	which implies that $\{T_{i}\}_{i\in\mathbb{J}}$ is a $K$-operator frame for $End_{\mathcal{A}}^{\ast}(\mathcal{H})$.
\end{proof}
\section{perturbation and stability of k-operator frames for $End_{\mathcal{A}}^{\ast}(\mathcal{H})$}
In this section we study the stability of $K$-operator frame under small perturbations. we begin with the following theorems.
\begin{thm}
	Let $\{T_{i}\}_{i\in\mathbb{J}}$ be a $K$-operator frame for $End_{\mathcal{A}}^{\ast}(\mathcal{H})$ with bound $A$ and $B$, let $\{R_{i}\}_{i\in\mathbb{J}}\subset End_{\mathcal{A}}^{\ast}(\mathcal{H})$ and $\{\alpha_{i}\}_{i\in\mathbb{J}},\{\beta_{i}\}_{i\in\mathbb{J}}\subset\mathbb{R}$ be two positively confined sequences. If there exist constants $0\leq\lambda, \mu<1$ such that for all $x\in\mathcal{H}$ we have:
	$$\|\sum_{i\in\mathbb{J}}\langle(\alpha_{i}T_{i}-\beta_{i}R_{i})x,(\alpha_{i}T_{i}-\beta_{i}R_{i})x\rangle\|^{\frac{1}{2}}$$$$\leq\lambda\|\sum_{i\in\mathbb{J}}\langle \alpha_{i}T_{i}x,\alpha_{i}T_{i}x\rangle\|^{\frac{1}{2}}+\mu\|\sum_{i\in\mathbb{J}}\langle \beta_{i}R_{i}x,\beta_{i}R_{i}x\rangle\|^{\frac{1}{2}}.$$
	Then $\{R_{i}\}_{i\in\mathbb{J}}$ is a $K$-operator frame for $End_{\mathcal{A}}^{\ast}(\mathcal{H})$. 
\end{thm}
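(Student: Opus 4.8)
The plan is to transplant the classical frame--perturbation argument into the Hilbert $\mathcal{A}$-module $\ell^{2}(\mathcal{H})$ and to read off the conclusion from the norm characterization of $K$-operator frames proved in the previous section. Throughout, write $\|\{y_{i}\}_{i}\|_{2}=\|\sum_{i\in\mathbb{J}}\langle y_{i},y_{i}\rangle\|^{\frac{1}{2}}$ for the norm on $\ell^{2}(\mathcal{H})$, and fix the confinement constants $0<a\le\alpha_{i}\le a'<\infty$ and $0<b\le\beta_{i}\le b'<\infty$ coming from the hypothesis that $\{\alpha_{i}\}_{i}$ and $\{\beta_{i}\}_{i}$ are positively confined.

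First I would settle the convergence issue, which is what is needed before the characterization theorem can be applied to $\{R_{i}\}_{i}$. Since $\{T_{i}\}_{i}$ is a $K$-operator frame, its analysis operator carries $x$ to $\{T_{i}x\}_{i}\in\ell^{2}(\mathcal{H})$; because $\|\sum_{N<i\le M}\alpha_{i}^{2}\langle T_{i}x,T_{i}x\rangle\|\le a'^{2}\|\sum_{N<i\le M}\langle T_{i}x,T_{i}x\rangle\|$, the partial sums of $\sum_{i}\alpha_{i}^{2}\langle T_{i}x,T_{i}x\rangle$ are Cauchy, so $\{\alpha_{i}T_{i}x\}_{i}\in\ell^{2}(\mathcal{H})$. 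The hypothesis asserts $\{(\alpha_{i}T_{i}-\beta_{i}R_{i})x\}_{i}\in\ell^{2}(\mathcal{H})$, hence by linearity $\{\beta_{i}R_{i}x\}_{i}\in\ell^{2}(\mathcal{H})$; dividing through by $\beta_{i}\ge b>0$ in the same Cauchy estimate gives $\{R_{i}x\}_{i}\in\ell^{2}(\mathcal{H})$, i.e. $\sum_{i}\langle R_{i}x,R_{i}x\rangle$ converges in norm.

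Next I would apply the triangle inequality in $\ell^{2}(\mathcal{H})$ twice to the identity $\beta_{i}R_{i}x=\alpha_{i}T_{i}x-(\alpha_{i}T_{i}-\beta_{i}R_{i})x$ and insert the assumed perturbation bound, obtaining $(1-\mu)\|\{\beta_{i}R_{i}x\}_{i}\|_{2}\le(1+\lambda)\|\{\alpha_{i}T_{i}x\}_{i}\|_{2}$ and, symmetrically, $(1-\lambda)\|\{\alpha_{i}T_{i}x\}_{i}\|_{2}\le(1+\mu)\|\{\beta_{i}R_{i}x\}_{i}\|_{2}$; the hypothesis $\lambda,\mu<1$ is used exactly here, to keep these coefficients positive when rearranging. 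Monotonicity of the $C^{\ast}$-norm on positive elements together with confinement gives
\[
a^{2}\Big\|\sum_{i\in\mathbb{J}}\langle T_{i}x,T_{i}x\rangle\Big\|\le\|\{\alpha_{i}T_{i}x\}_{i}\|_{2}^{2}\le a'^{2}\Big\|\sum_{i\in\mathbb{J}}\langle T_{i}x,T_{i}x\rangle\Big\|,
\]
and the analogous two-sided estimate relating $\|\{\beta_{i}R_{i}x\}_{i}\|_{2}^{2}$ with $\|\sum_{i\in\mathbb{J}}\langle R_{i}x,R_{i}x\rangle\|$ through $b^{2}$ and $b'^{2}$. Combining these with the frame inequalities $A\|K^{\ast}x\|^{2}\le\|\sum_{i\in\mathbb{J}}\langle T_{i}x,T_{i}x\rangle\|\le B\|x\|^{2}$ yields, for all $x\in\mathcal{H}$,
\[
\frac{a^{2}A}{b'^{2}}\Big(\frac{1-\lambda}{1+\mu}\Big)^{2}\|K^{\ast}x\|^{2}\le\Big\|\sum_{i\in\mathbb{J}}\langle R_{i}x,R_{i}x\rangle\Big\|\le\frac{a'^{2}B}{b^{2}}\Big(\frac{1+\lambda}{1-\mu}\Big)^{2}\|x\|^{2},
\]
and an application of the characterization theorem for $K$-operator frames finishes the proof.

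I do not expect a genuine obstacle here; the argument is bookkeeping with the weights. The one point that genuinely matters is the two-sidedness of the confinement: the strictly positive infima $\inf_{\mathbb{J}}\alpha_{i}$ and $\inf_{\mathbb{J}}\beta_{i}$ are what make the convergence step work and what leave a genuine lower $K$-frame bound after one passes from the weighted sums back to $\sum_{i}\langle T_{i}x,T_{i}x\rangle$ and $\sum_{i}\langle R_{i}x,R_{i}x\rangle$, whereas the finite suprema are needed only to produce the upper bound. The only slightly delicate technical point is the convergence claim above, reading the hypothesis correctly as an assertion of membership in $\ell^{2}(\mathcal{H})$ rather than merely as a finite numerical symbol.
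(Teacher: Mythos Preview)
Your argument is correct and follows essentially the same route as the paper: apply the triangle inequality in $\ell^{2}(\mathcal{H})$ to $\{\beta_{i}R_{i}x\}_{i}=\{\alpha_{i}T_{i}x\}_{i}-\{(\alpha_{i}T_{i}-\beta_{i}R_{i})x\}_{i}$ and its rearrangement, feed in the perturbation hypothesis to obtain the two-sided comparison between $\|\{\alpha_{i}T_{i}x\}_{i}\|$ and $\|\{\beta_{i}R_{i}x\}_{i}\|$, strip the weights via the confinement bounds, and finish with the $K$-frame inequalities for $\{T_{i}\}_{i}$ together with the norm characterization of $K$-operator frames. The only differences are cosmetic: you are more explicit about the convergence of $\sum_{i}\langle R_{i}x,R_{i}x\rangle$ and about invoking the characterization theorem at the end, both of which the paper leaves implicit.
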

\begin{proof}
	For any $x\in\mathcal{H}$ we have
	\begin{align*}
	\|\{\beta_{i}R_{i}x\}_{i}\|&\leq\|\{(\alpha_{i}T_{i}-\beta_{i}R_{i})x\}_{i}\|+\|\{\alpha_{i}T_{i}x\}_{i}\|\\
	&\leq\lambda\|\{\alpha_{i}T_{i}x\}_{i}\|+\mu\|\{\beta_{i}R_{i}x\}_{i}\|+\|\{\alpha_{i}T_{i}x\}_{i}\|\\
	&=(1+\lambda)\|\{\alpha_{i}T_{i}x\}_{i}\|+\mu\|\{\beta_{i}R_{i}x\}_{i}\|.
	\end{align*}
	So
	$$(1-\mu)\|\{\beta_{i}R_{i}x\}_{i}\|\leq(1+\lambda)\|\{\alpha_{i}T_{i}x\}_{i}\|.$$
	Hence
	$$(1-\mu)\inf_{\mathbb{J}}\beta_{i}\|\{R_{i}x\}_{i}\|\leq(1+\lambda)\sup_{\mathbb{J}}\alpha_{i}\|\{T_{i}x\}_{i}\|.$$
	Thus
	$$\|\{R_{i}x\}_{i}\|\leq\frac{(1+\lambda)\sup_{\mathbb{J}}\alpha_{i}}{(1-\mu)\inf_{\mathbb{J}}\beta_{i}}\|\{T_{i}x\}_{i}\|.$$
	Also, For any $x\in\mathcal{H}$ we have
	\begin{align*}
	\|\{\alpha_{i}T_{i}x\}_{i}\|&\leq\|\{(\alpha_{i}T_{i}-\beta_{i}R_{i})x\}_{i}\|+\|\{\beta_{i}R{i}x\}_{i}\|\\
	&\leq\lambda\|\{\alpha_{i}T_{i}x\}_{i}\|+\mu\|\{\beta_{i}R_{i}x\}_{i}\|+\|\{\beta_{i}R_{i}x\}_{i}\|\\
	&=\lambda\|\{\alpha_{i}T_{i}x\}_{i}\|+(1+\mu)\|\{\beta_{i}R_{i}x\}_{i}\|.
	\end{align*}
	So
	$$(1-\lambda)\|\{\alpha_{i}T_{i}x\}_{i}\|\leq(1+\mu)\|\{\beta_{i}R_{i}x\}_{i}\|.$$
	Hence
	$$(1-\lambda)\inf_{\mathbb{J}}\alpha_{i}\|\{T_{i}x\}_{i}\|\leq(1+\mu)\sup_{\mathbb{J}}\beta_{i}\|\{R_{i}x\}_{i}\|.$$
	Thus
	$$\frac{(1-\lambda)\inf_{\mathbb{J}}\alpha_{i}}{(1+\mu)\sup_{\mathbb{J}}\beta_{i}}\|\{T_{i}x\}_{i}\|\leq\|\{R_{i}x\}_{i}\|.$$
	This gives
	\begin{align*}
	A(\frac{(1-\lambda)\inf_{\mathbb{J}}\alpha_{i}}{(1+\mu)\sup_{\mathbb{J}}\beta_{i}})^{2}\|\langle x,x\rangle\|\leq(\frac{(1-\lambda)\inf_{\mathbb{J}}\alpha_{i}}{(1+\mu)\sup_{\mathbb{J}}\beta_{i}})^{2}\|\{T_{i}x\}_{i}\|^{2}\leq\|\{R_{i}x\}_{i}\|^{2}\\\leq(\frac{(1+\lambda)\sup_{\mathbb{J}}\alpha_{i}}{(1-\mu)\inf_{\mathbb{J}}\beta_{i}})^{2}\|\{T_{i}x\}_{i}\|^{2}\leq B(\frac{(1+\lambda)\sup_{\mathbb{J}}\alpha_{i}}{(1-\mu)\inf_{\mathbb{J}}\beta_{i}})^{2}\|\langle x,x\rangle\|	
	\end{align*}
	Hence
	$$A(\frac{(1-\lambda)\inf_{\mathbb{J}}\alpha_{i}}{(1+\mu)\sup_{\mathbb{J}}\beta_{i}})^{2}\|\langle x,x\rangle\|\leq\|\sum_{i\in\mathbb{J}}\langle R_{i}x,R_{i}x\rangle\|\leq B(\frac{(1+\lambda)\sup_{\mathbb{J}}\alpha_{i}}{(1-\mu)\inf_{\mathbb{J}}\beta_{i}})^{2}\|\langle x,x\rangle\|.$$
	Then, $\{R_{i}\}_{i}$ is a $K$-operator frame for $End_{\mathcal{A}}^{\ast}(\mathcal{H})$.	
\end{proof}
\begin{thm}
	Let $\{T_{i}\}_{i\in\mathbb{J}}$ be a $K$-operator frame for $End_{\mathcal{A}}^{\ast}(\mathcal{H})$ with bound $A$ and $B$. Let $\{R_{i}\}_{i\in\mathbb{J}}\subset End_{\mathcal{A}}^{\ast}(\mathcal{H})$ and $\alpha,\beta\geq0$. If $0\leq\alpha+\frac{\beta}{A}<1$ such that
	\begin{equation*}
	\|\sum_{i\in\mathbb{J}}\langle(T_{i}-R_{i})x,(T_{i}-R_{i})x\rangle\|\leq\alpha\|\sum_{i\in\mathbb{J}}\langle T_{i}x,T_{i}x\rangle\|+\beta\|\langle K^{\ast}x,K^{\ast}x\rangle\|, \forall x\in\mathcal{H}.
	\end{equation*}
	Then $\{R_{i}\}_{i\in\mathbb{J}}$ is a $K$-operator frame with frame bounds $A(1-\sqrt{\alpha+\frac{\beta}{A}})^{2}$ and $B(1+\sqrt{\alpha+\frac{\beta}{A}})^{2}$.
\end{thm}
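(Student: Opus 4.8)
The plan is to reduce the two module-inequalities defining a $K$-operator frame to the numerical inequalities of \eqref{eq42} and then invoke the characterization theorem of Section 4. Write $\gamma:=\sqrt{\alpha+\frac{\beta}{A}}$; the hypothesis $0\le\alpha+\frac{\beta}{A}<1$ gives $0\le\gamma<1$. For any operator Bessel family $\{S_{i}\}_{i\in\mathbb{J}}$ one has the identification $\|\{S_{i}x\}_{i\in\mathbb{J}}\|_{l^{2}(\mathcal{H})}=\|\sum_{i\in\mathbb{J}}\langle S_{i}x,S_{i}x\rangle\|^{\frac{1}{2}}$, so the assumed estimate says exactly that $\|\{(T_{i}-R_{i})x\}_{i}\|\le\big(\alpha\|\{T_{i}x\}_{i}\|^{2}+\beta\|K^{\ast}x\|^{2}\big)^{\frac{1}{2}}$ for every $x\in\mathcal{H}$. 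Since $\{T_{i}\}_{i\in\mathbb{J}}$ is a $K$-operator frame with lower bound $A$, \eqref{eq42} gives $A\|K^{\ast}x\|^{2}\le\|\{T_{i}x\}_{i}\|^{2}$, hence $\alpha\|\{T_{i}x\}_{i}\|^{2}+\beta\|K^{\ast}x\|^{2}\le\big(\alpha+\frac{\beta}{A}\big)\|\{T_{i}x\}_{i}\|^{2}=\gamma^{2}\|\{T_{i}x\}_{i}\|^{2}$, which yields the key perturbation estimate $\|\{(T_{i}-R_{i})x\}_{i}\|\le\gamma\|\{T_{i}x\}_{i}\|$. In particular $\{R_{i}x\}_{i}=\{T_{i}x\}_{i}-\{(T_{i}-R_{i})x\}_{i}\in l^{2}(\mathcal{H})$, so $\sum_{i\in\mathbb{J}}\langle R_{i}x,R_{i}x\rangle$ converges in norm and the characterization theorem will be applicable.

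Next I would produce the two numerical frame inequalities for $\{R_{i}\}_{i}$ by the triangle inequality in $l^{2}(\mathcal{H})$. For the upper bound, $\|\{R_{i}x\}_{i}\|\le\|\{T_{i}x\}_{i}\|+\|\{(T_{i}-R_{i})x\}_{i}\|\le(1+\gamma)\|\{T_{i}x\}_{i}\|\le(1+\gamma)\sqrt{B}\,\|x\|$, so $\|\sum_{i\in\mathbb{J}}\langle R_{i}x,R_{i}x\rangle\|\le B(1+\gamma)^{2}\|x\|^{2}$. For the lower bound, the reverse triangle inequality gives $\|\{T_{i}x\}_{i}\|\le\|\{R_{i}x\}_{i}\|+\|\{(T_{i}-R_{i})x\}_{i}\|\le\|\{R_{i}x\}_{i}\|+\gamma\|\{T_{i}x\}_{i}\|$, i.e. $(1-\gamma)\|\{T_{i}x\}_{i}\|\le\|\{R_{i}x\}_{i}\|$; combining this with $\sqrt{A}\,\|K^{\ast}x\|\le\|\{T_{i}x\}_{i}\|$ (again from \eqref{eq42}) yields $\sqrt{A}\,(1-\gamma)\|K^{\ast}x\|\le\|\{R_{i}x\}_{i}\|$, and since $1-\gamma>0$ this is a genuine bound; squaring gives $A(1-\gamma)^{2}\|K^{\ast}x\|^{2}\le\|\sum_{i\in\mathbb{J}}\langle R_{i}x,R_{i}x\rangle\|$.

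Finally I would apply the $K$-operator frame characterization theorem of Section 4 to the family $\{R_{i}\}_{i\in\mathbb{J}}$: the norm convergence of $\sum_{i\in\mathbb{J}}\langle R_{i}x,R_{i}x\rangle$ together with the two inequalities $A(1-\gamma)^{2}\|K^{\ast}x\|^{2}\le\|\sum_{i\in\mathbb{J}}\langle R_{i}x,R_{i}x\rangle\|\le B(1+\gamma)^{2}\|x\|^{2}$ established above imply that $\{R_{i}\}_{i\in\mathbb{J}}$ is a $K$-operator frame for $End_{\mathcal{A}}^{\ast}(\mathcal{H})$ with bounds $A(1-\sqrt{\alpha+\frac{\beta}{A}})^{2}$ and $B(1+\sqrt{\alpha+\frac{\beta}{A}})^{2}$. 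No step here is a serious obstacle; the only points deserving a moment's care are the subadditivity-type step bounding $\alpha\|\{T_{i}x\}_{i}\|^{2}+\beta\|K^{\ast}x\|^{2}$ by $\gamma^{2}\|\{T_{i}x\}_{i}\|^{2}$ via the lower frame bound of $\{T_{i}\}$, and checking that $\{R_{i}x\}_{i}\in l^{2}(\mathcal{H})$ so that the characterization theorem indeed applies.
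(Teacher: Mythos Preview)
Your argument is correct and follows essentially the same route as the paper: both proofs pass to the $l^{2}(\mathcal{H})$-norm, use the lower $K$-frame bound of $\{T_{i}\}$ to absorb the $\beta\|K^{\ast}x\|^{2}$ term into $(\alpha+\frac{\beta}{A})\|\{T_{i}x\}_{i}\|^{2}$, and then obtain the upper and lower inequalities for $\|\{R_{i}x\}_{i}\|$ via the triangle and reverse triangle inequalities. You are in fact a bit more explicit than the paper in two places: you verify that $\{R_{i}x\}_{i}\in l^{2}(\mathcal{H})$ so that $\sum_{i}\langle R_{i}x,R_{i}x\rangle$ converges, and you explicitly invoke the characterization theorem of Section~4 to pass from the norm inequalities back to the $K$-operator frame conclusion, whereas the paper simply asserts the conclusion from the norm estimates.
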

\begin{proof}
	Let $\{T_{i}\}_{i\in\mathbb{J}}$ be a $K$-operator frame for $End_{\mathcal{A}}^{\ast}(\mathcal{H})$  with bound $A$ and $B$. Then for all $x\in\mathcal{H}$, we have
	\begin{align*}
	\|\{T_{i}x\}_{i}\|&\leq\|\{(T_{i}-R_{i})x\}_{i}\|+\|\{R_{i}x\}_{i}\|\\
	&\leq(\alpha\|\sum_{i\in\mathbb{J}}\langle T_{i}x,T_{i}x\rangle\|+\beta\|\langle K^{\ast}x,K^{\ast}x\rangle\|)^{\frac{1}{2}}+\|\sum_{i\in\mathbb{J}}\langle R_{i}x,R_{i}x\rangle\|^{\frac{1}{2}}\\
	&\leq(\alpha\|\sum_{i\in\mathbb{J}}\langle T_{i}x,T_{i}x\rangle\|+\frac{\beta}{A}\|\sum_{i\in\mathbb{J}}\langle T_{i}x,T_{i}x\rangle\|)^{\frac{1}{2}}+\|\sum_{i\in\mathbb{J}}\langle R_{i}x,R_{i}x\rangle\|^{\frac{1}{2}}\\
	&=\sqrt{\alpha+\frac{\beta}{A}}\|\{T_{i}x\}_{i}\|+\|\sum_{i\in\mathbb{J}}\langle R_{i}x,R_{i}x\rangle\|^{\frac{1}{2}}.
	\end{align*}
	Then
	\begin{equation*}
	(1-\sqrt{\alpha+\frac{\beta}{A}})\|\{T_{i}x\}_{i}\|\leq\|\sum_{i\in\mathbb{J}}\langle R_{i}x,R_{i}x\rangle\|^{\frac{1}{2}}.
	\end{equation*}
	This gives
	\begin{equation*}
	A(1-\sqrt{\alpha+\frac{\beta}{A}})^{2}\|\langle K^{\ast}x,K^{\ast}x\rangle\|\leq(1-\sqrt{\alpha+\frac{\beta}{A}})^{2}\|\sum_{i\in\mathbb{J}}\langle T_{i}x,T_{i}x\rangle\|\leq\|\sum_{i\in\mathbb{J}}\langle R_{i}x,R_{i}x\rangle\|.
	\end{equation*}
	Also, we have
	\begin{align*}
	\|\{R_{i}x\}_{i}\|&\leq\|\{(T_{i}-R_{i})x\}_{i}\|+\|\{T_{i}x\}_{i}\|\\
	&\leq\sqrt{\alpha+\frac{\beta}{A}}\|\{T_{i}x\}_{i}\|+\|\{T_{i}x\}_{i}\|\\
	&=(1+\sqrt{\alpha+\frac{\beta}{A}})\|\{T_{i}x\}_{i}\|\\
	&\leq\sqrt{B}(1+\sqrt{\alpha+\frac{\beta}{A}})\|\langle x,x\rangle\|.
	\end{align*}
	Then
	\begin{equation*}
	\|\sum_{i\in\mathbb{J}}\langle R_{i}x,R_{i}x\rangle\|\leq B(1+\sqrt{\alpha+\frac{\beta}{A}})^{2}\|\langle x,x\rangle\|.
	\end{equation*}
	So
	\begin{equation*}
	A(1-\sqrt{\alpha+\frac{\beta}{A}})^{2}\|\langle K^{\ast}x,K^{\ast}x\rangle\|\leq\|\sum_{i\in\mathbb{J}}\langle R_{i}x,R_{i}x\rangle\|\leq B(1+\sqrt{\alpha+\frac{\beta}{A}})^{2}\|\langle x,x\rangle\|.
	\end{equation*}
	Hence $\{R_{i}\}_{i\in\mathbb{J}}$ is a $K$-operator frame with frame bounds $A(1-\sqrt{\alpha+\frac{\beta}{A}})^{2}$ and $B(1+\sqrt{\alpha+\frac{\beta}{A}})^{2}$.
\end{proof}
\begin{cor}
	Let $\{T_{i}\}_{i\in\mathbb{J}}$ be a $K$-operator frame for $End_{\mathcal{A}}^{\ast}(\mathcal{H})$ with bound $A$ and $B$. Let $\{R_{i}\}_{i\in\mathbb{J}}\subset End_{\mathcal{A}}^{\ast}(\mathcal{H})$ and $\alpha\geq0$. If $0\leq\alpha<A$ such that
	\begin{equation*}
	\|\sum_{i\in\mathbb{J}}\langle(T_{i}-R_{i})x,(T_{i}-R_{i})x\rangle\|\leq\alpha\|\langle K^{\ast}x,K^{\ast}x\rangle\|, \forall x\in\mathcal{H}.
	\end{equation*}
	Then $\{R_{i}\}_{i\in\mathbb{J}}$ is a $K$-operator frame with frame bounds $A(1-\sqrt{\frac{\alpha}{A}})^{2}$ and $B(1+\sqrt{\frac{\alpha}{A}})^{2}$.
\end{cor}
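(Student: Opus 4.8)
This is the special case of the preceding theorem obtained by switching off the first perturbation coefficient. Concretely, the plan is to apply the previous theorem with the roles of its two nonnegative parameters taken to be $\alpha_{\mathrm{old}} = 0$ and $\beta_{\mathrm{old}} = \alpha$, where $\alpha$ is the constant from the corollary's hypothesis. First I would observe that under this substitution the compatibility condition $0 \le \alpha_{\mathrm{old}} + \frac{\beta_{\mathrm{old}}}{A} < 1$ of the theorem reads $0 \le \frac{\alpha}{A} < 1$, which is precisely the assumption $0 \le \alpha < A$ made in the corollary (using $A>0$).

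Next I would check that the perturbation inequality matches: with $\alpha_{\mathrm{old}}=0$ the theorem's hypothesis
$$\Bigl\|\sum_{i\in\mathbb{J}}\langle(T_{i}-R_{i})x,(T_{i}-R_{i})x\rangle\Bigr\|\leq\alpha_{\mathrm{old}}\Bigl\|\sum_{i\in\mathbb{J}}\langle T_{i}x,T_{i}x\rangle\Bigr\|+\beta_{\mathrm{old}}\|\langle K^{\ast}x,K^{\ast}x\rangle\|,\quad \forall x\in\mathcal{H},$$
collapses to $\bigl\|\sum_{i\in\mathbb{J}}\langle(T_{i}-R_{i})x,(T_{i}-R_{i})x\rangle\bigr\|\leq\alpha\|\langle K^{\ast}x,K^{\ast}x\rangle\|$, which is exactly the standing assumption of the corollary. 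Hence the hypotheses of the theorem are all satisfied for the sequences $\{T_i\}_{i\in\mathbb{J}}$ and $\{R_i\}_{i\in\mathbb{J}}$.

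Finally I would read off the conclusion: the theorem asserts that $\{R_i\}_{i\in\mathbb{J}}$ is a $K$-operator frame with bounds
$$A\Bigl(1-\sqrt{\alpha_{\mathrm{old}}+\tfrac{\beta_{\mathrm{old}}}{A}}\Bigr)^{2}=A\Bigl(1-\sqrt{\tfrac{\alpha}{A}}\Bigr)^{2}\quad\text{and}\quad B\Bigl(1+\sqrt{\alpha_{\mathrm{old}}+\tfrac{\beta_{\mathrm{old}}}{A}}\Bigr)^{2}=B\Bigl(1+\sqrt{\tfrac{\alpha}{A}}\Bigr)^{2},$$
which is the claimed statement. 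There is no real obstacle here: the only thing to be careful about is the bookkeeping in the parameter substitution and making sure the strict inequality $\alpha < A$ (rather than $\alpha \le A$) is what guarantees the lower bound $A(1-\sqrt{\alpha/A})^2$ is strictly positive, so that $\{R_i\}_{i\in\mathbb{J}}$ is genuinely a frame and not merely a Bessel sequence.
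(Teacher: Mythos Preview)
Your proposal is correct and matches the paper's approach exactly: the paper's proof reads ``Follows in view of the last Theorem,'' and you have spelled out precisely the intended specialization $\alpha_{\mathrm{old}}=0$, $\beta_{\mathrm{old}}=\alpha$.
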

\begin{proof}
	Follows in view of the last Theorem.
\end{proof}
\begin{thm}
	Let $\{T_{i}\}_{i\in\mathbb{J}}$ be a $K$-operator frame for $End_{\mathcal{A}}^{\ast}(\mathcal{H})$ with bound $A$ and $B$. Let $\{R_{i}\}_{i\in\mathbb{J}}\in End_{\mathcal{A}}^{\ast}(\mathcal{H})$. If there exists a constant $M>0$, such that for all $x\in\mathcal{H}$, we have
	\begin{equation}\label{eq51}
	\|\sum_{i\in\mathbb{J}}\langle(T_{i}-R_{i})x,(T_{i}-R_{i})x\rangle\|\leq M\min(\|\sum_{i\in\mathbb{J}}\langle T_{i}x,T_{i}x\rangle\|,\|\sum_{i\in\mathbb{J}}\langle R_{i}x,R_{i}x\rangle\|).
	\end{equation}
	Then $\{R_{i}\}_{i\in\mathbb{J}}$ is a $K$-operator frame for $End_{\mathcal{A}}^{\ast}(\mathcal{H})$. The converse is valid for any co-isometry operator $K$.
\end{thm}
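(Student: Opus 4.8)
The plan is to follow the pattern of the earlier perturbation result \eqref{eq33}, working in $\ell^{2}(\mathcal{H})$ with the shorthand $\|\{S_{i}x\}_{i}\|:=\|\sum_{i\in\mathbb{J}}\langle S_{i}x,S_{i}x\rangle\|^{1/2}$ for $S_{i}\in\{T_{i},R_{i},T_{i}-R_{i}\}$, and invoking the norm characterization \eqref{eq42} at the end. First note that \eqref{eq51} yields, for every $x\in\mathcal{H}$, the two estimates $\|\{(T_{i}-R_{i})x\}_{i}\|\le\sqrt{M}\,\|\{T_{i}x\}_{i}\|$ and $\|\{(T_{i}-R_{i})x\}_{i}\|\le\sqrt{M}\,\|\{R_{i}x\}_{i}\|$; implicitly it also forces $\sum_{i\in\mathbb{J}}\langle R_{i}x,R_{i}x\rangle$ to converge in norm, which is what makes \eqref{eq42} applicable to $\{R_{i}\}_{i\in\mathbb{J}}$.

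For the forward implication, I would obtain the upper bound by writing $\{R_{i}x\}_{i}=\{T_{i}x\}_{i}-\{(T_{i}-R_{i})x\}_{i}$, applying the triangle inequality and the first estimate above to get $\|\{R_{i}x\}_{i}\|\le(1+\sqrt{M})\|\{T_{i}x\}_{i}\|\le(1+\sqrt{M})\sqrt{B}\,\|x\|$, i.e. $\|\sum_{i\in\mathbb{J}}\langle R_{i}x,R_{i}x\rangle\|\le B(1+\sqrt{M})^{2}\|x\|^{2}$. For the lower bound I would instead bound $\|\{T_{i}x\}_{i}\|\le\|\{R_{i}x\}_{i}\|+\|\{(T_{i}-R_{i})x\}_{i}\|$ and use the second estimate to get $\|\{T_{i}x\}_{i}\|\le(1+\sqrt{M})\|\{R_{i}x\}_{i}\|$; combining with the left-hand inequality $\sqrt{A}\|K^{\ast}x\|\le\|\{T_{i}x\}_{i}\|$ gives $\frac{A}{(1+\sqrt{M})^{2}}\|K^{\ast}x\|^{2}\le\|\sum_{i\in\mathbb{J}}\langle R_{i}x,R_{i}x\rangle\|$. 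By the characterization \eqref{eq42}, these two inequalities say exactly that $\{R_{i}\}_{i\in\mathbb{J}}$ is a $K$-operator frame with bounds $A(1+\sqrt{M})^{-2}$ and $B(1+\sqrt{M})^{2}$.

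For the converse, assume $K$ is a co-isometry, so that $KK^{\ast}=\mathrm{id}_{\mathcal{H}}$ and hence $\langle K^{\ast}x,K^{\ast}x\rangle=\langle x,x\rangle$, i.e. $\|K^{\ast}x\|=\|x\|$ for all $x$. If $\{R_{i}\}_{i\in\mathbb{J}}$ is a $K$-operator frame, say with bounds $C,D$, then the lower frame inequalities become $\sqrt{A}\|x\|\le\|\{T_{i}x\}_{i}\|$ and $\sqrt{C}\|x\|\le\|\{R_{i}x\}_{i}\|$. Starting from $\|\{(T_{i}-R_{i})x\}_{i}\|\le\|\{T_{i}x\}_{i}\|+\|\{R_{i}x\}_{i}\|$ and using $\|\{R_{i}x\}_{i}\|\le\sqrt{D}\|x\|\le\sqrt{D/A}\,\|\{T_{i}x\}_{i}\|$ I would get $\|\{(T_{i}-R_{i})x\}_{i}\|\le(1+\sqrt{D/A})\|\{T_{i}x\}_{i}\|$, and symmetrically $\|\{(T_{i}-R_{i})x\}_{i}\|\le(1+\sqrt{B/C})\|\{R_{i}x\}_{i}\|$. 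Squaring and taking $M=\max\{(1+\sqrt{D/A})^{2},(1+\sqrt{B/C})^{2}\}$ then yields \eqref{eq51}, since for each $x$ the left side is dominated by both $(1+\sqrt{D/A})^{2}\|\sum_{i}\langle T_{i}x,T_{i}x\rangle\|$ and $(1+\sqrt{B/C})^{2}\|\sum_{i}\langle R_{i}x,R_{i}x\rangle\|$, hence by $M$ times their minimum.

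The triangle-inequality manipulations are routine; the one delicate point is the converse, where the co-isometry hypothesis on $K$ is exactly what turns control of $\|K^{\ast}x\|$ — all that the left-hand inequality of a $K$-operator frame provides — into control of $\|x\|$, without which the upper-bound terms $\sqrt{B}\|x\|$ and $\sqrt{D}\|x\|$ cannot be absorbed into $\|\{T_{i}x\}_{i}\|$ or $\|\{R_{i}x\}_{i}\|$.
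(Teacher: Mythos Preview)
Your argument is essentially identical to the paper's: the same triangle-inequality estimates in $\ell^{2}(\mathcal{H})$ for the forward direction, and the same use of the co-isometry identity $\|K^{\ast}x\|=\|x\|$ to absorb the Bessel bounds into the lower frame bounds for the converse. One small point worth noting: your choice $M=\max\{(1+\sqrt{D/A})^{2},(1+\sqrt{B/C})^{2}\}$ is the correct one, whereas the paper writes $M=\min\{1+\sqrt{D/A},1+\sqrt{B/C}\}$, which is a slip---to dominate the minimum of two quantities one needs the larger of the two coefficients, and the squares are needed to match \eqref{eq51}.
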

\begin{proof}
	We suppose that \eqref{eq51} holds. For evry $x\in\mathcal{H}$, we have 
	\begin{align}\label{eq52}
	\sqrt{A}\|K^{\ast}x\|\leq\|\sum_{i\in\mathbb{J}}\langle T_{i}x,T_{i}x\rangle\|^{\frac{1}{2}}&=\|\{T_{i}x\}_{i\in\mathbb{J}}\|\leq\|\{(T_{i}-R_{i})x\}_{i\mathbb{J}}\|+\|\{R_{i}x\}_{i\in\mathbb{J}}\|\notag\\
	&=\|\sum_{i\in\mathbb{J}}\langle(T_{i}-R_{i})x,(T_{i}-R_{i})x\rangle\|^{\frac{1}{2}}+\|\sum_{i\in\mathbb{J}}\langle R_{i}x,R_{i}x\rangle\|^{\frac{1}{2}}\notag\\
	&\leq\sqrt{M}\|\sum_{i\in\mathbb{J}}\langle R_{i}x,R_{i}x\rangle\|^{\frac{1}{2}}+\|\sum_{i\in\mathbb{J}}\langle R_{i}x,R_{i}x\rangle\|^{\frac{1}{2}}\notag\\
	&=(\sqrt{M}+1)\|\sum_{i\in\mathbb{J}}\langle R_{i}x,R_{i}x\rangle\|^{\frac{1}{2}}.
	\end{align}
	Also we have
	\begin{align}\label{eq53}
	\|\sum_{i\in\mathbb{J}}\langle R_{i}x,R_{i}x\rangle\|^{\frac{1}{2}}&=\|\{R_{i}x\}_{i\in\mathbb{J}}\|\leq\|\{T_{i}x\}_{i\in\mathbb{J}}\|+\|\{(T_{i}-R_{i})x\}_{i\mathbb{J}}\|\notag\\
	&=\|\sum_{i\in\mathbb{J}}\langle T_{i}x,T_{i}x\rangle\|^{\frac{1}{2}}+\|\sum_{i\in\mathbb{J}}\langle(T_{i}-R_{i})x,(T_{i}-R_{i})x\rangle\|^{\frac{1}{2}}\notag\\
	&\leq\|\sum_{i\in\mathbb{J}}\langle T_{i}x,T_{i}x\rangle\|^{\frac{1}{2}}+\sqrt{M}\|\sum_{i\in\mathbb{J}}\langle T_{i}x,T_{i}x\rangle\|^{\frac{1}{2}}\notag\\
	&=(\sqrt{M}+1)\sum_{i\in\mathbb{J}}\langle T_{i}x,T_{i}x\rangle\|^{\frac{1}{2}}\notag\\
	&\leq\sqrt{B}(\sqrt{M}+1)\|x\|.
	\end{align}
	Of \eqref{eq52} and \eqref{eq53} we obtain
	\begin{equation*}
	\frac{A}{(\sqrt{M}+1)^{2}}\|K^{\ast}x\|^{2}\leq\|\sum_{i\in\mathbb{J}}\langle R_{i}x,R_{i}x\rangle\|\leq B(\sqrt{M}+1)^{2}\|x\|^{2}.
	\end{equation*}
	so we have $\{R_{i}\}_{i\in\mathbb{J}}$ is a $K$-operator frame for $End_{\mathcal{A}}^{\ast}(\mathcal{H})$.\\
	For the converse suppose that $\{R_{i}\}_{i\in\mathbb{J}}\in End_{\mathcal{A}}^{\ast}(\mathcal{H})$ be an operator frame with bound $C$ and $D$, and $K$ is a co-isometry operator on $\mathcal{H}$, i.e., $\|K^{\ast}x\|=\|x\|, \forall x\in\mathcal{H}$. Then for any $x\in\mathcal{H}$, we have
	\begin{align*}
	\|\sum_{i\in\mathbb{J}}\langle(T_{i}-R_{i})x,(T_{i}-R_{i})x\rangle\|^{\frac{1}{2}}&=\|\{(T_{i}-R_{i})x\}_{i\in\mathbb{J}}\|\leq\|\{T_{i}x\}_{i\in\mathbb{J}}\|+\|\{R_{i}x\}_{i\in\mathbb{J}}\|\\
	&=\|\sum_{i\in\mathbb{J}}\langle T_{i}x,T_{i}x\rangle\|^{\frac{1}{2}}+\|\sum_{i\in\mathbb{J}}\langle R_{i}x,R_{i}x\rangle\|^{\frac{1}{2}}\\
	&\leq\|\sum_{i\in\mathbb{J}}\langle T_{i}x,T_{i}x\rangle\|^{\frac{1}{2}}+\sqrt{D}\|x\|\\
	&=\|\sum_{i\in\mathbb{J}}\langle T_{i}x,T_{i}x\rangle\|^{\frac{1}{2}}+\sqrt{D}\|K^{\ast}x\|\\
	&\leq\|\sum_{i\in\mathbb{J}}\langle T_{i}x,T_{i}x\rangle\|^{\frac{1}{2}}+\sqrt{\frac{D}{A}}\|\sum_{i\in\mathbb{J}}\langle T_{i}x,T_{i}x\rangle\|^{\frac{1}{2}}\\
	&=(1+\sqrt{\frac{D}{A}})\|\sum_{i\in\mathbb{J}}\langle T_{i}x,T_{i}x\rangle\|^{\frac{1}{2}}
	\end{align*}
	Similary we can obtain
	\begin{equation*}
	\|\sum_{i\in\mathbb{J}}\langle(T_{i}-R_{i})x,(T_{i}-R_{i})x\rangle\|^{\frac{1}{2}}\leq(1+\sqrt{\frac{B}{C}})\|\sum_{i\in\mathbb{J}}\langle R_{i}x,R_{i}x\rangle\|^{\frac{1}{2}}
	\end{equation*}
	Let $M=\min\{1+\sqrt{\frac{D}{A}},1+\sqrt{\frac{B}{C}}\}$, then \eqref{eq51} holds.\\
\end{proof}
\begin{thm}
	Let $K\in End_{\mathcal{A}}^{\ast}(\mathcal{H})$. For $n=1,2,..,k$, let $\{T_{n,i}\}_{i}\subset End_{\mathcal{A}}^{\ast}(\mathcal{H})$ be $K$-operator frame for $End_{\mathcal{A}}^{\ast}(\mathcal{H})$ with bound $A_{n}$ and $B_{n}$, $\{\alpha_{n}\}_{n=1}^{k}$ be any scalars. If there exists a constant $\lambda>0$ and $p\in\{1,2,...,k\}$ such that
	\begin{equation*}
	\lambda\|\{T_{p,i}\}_{i}\|\leq\|\{\sum_{n=1}^{k}\alpha_{n}T_{n,i}(x)\}_{i}\|, x\in\mathcal{H}.
	\end{equation*}
	Then $\{\sum_{n=1}^{k}\alpha_{n}T_{n,i}\}_{i}$ is a $K$-operator frame for $End_{\mathcal{A}}^{\ast}(\mathcal{H})$. The converse is valid for any co-isometry operator $K$.
\end{thm}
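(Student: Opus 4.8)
The plan is to imitate the finite-sum theorem for ordinary operator frames proved above, replacing the control by $\|\langle x,x\rangle\|$ in the lower estimate by control via $\|\langle K^{\ast}x,K^{\ast}x\rangle\|=\|K^{\ast}x\|^{2}$, and then invoking the norm characterization of $K$-operator frames established above (the last theorem of the preceding section).

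First I would record that, since each $\{T_{n,i}\}_{i}$ is a $K$-operator frame with bounds $A_{n},B_{n}$, that characterization gives
$$\sqrt{A_{n}}\,\|K^{\ast}x\|\leq\|\{T_{n,i}x\}_{i}\|\leq\sqrt{B_{n}}\,\|x\|,\qquad x\in\mathcal{H},$$
and in particular the series defining $\|\{\sum_{n=1}^{k}\alpha_{n}T_{n,i}(x)\}_{i}\|$ converges in norm, being a finite linear combination of norm-convergent ones. For the lower bound of the candidate frame I would chain the hypothesis with the lower inequality for the distinguished index $p$,
$$\sqrt{A_{p}}\,\lambda\,\|K^{\ast}x\|\leq\lambda\,\|\{T_{p,i}x\}_{i}\|\leq\Big\|\Big\{\textstyle\sum_{n=1}^{k}\alpha_{n}T_{n,i}(x)\Big\}_{i}\Big\|,$$
and for the upper bound I would use the triangle inequality in $\ell^{2}(\mathcal{H})$ followed by the upper inequalities,
$$\Big\|\Big\{\textstyle\sum_{n=1}^{k}\alpha_{n}T_{n,i}(x)\Big\}_{i}\Big\|\leq\sum_{n=1}^{k}|\alpha_{n}|\,\|\{T_{n,i}x\}_{i}\|\leq\Big(\max_{1\leq n\leq k}|\alpha_{n}|\Big)\Big(\sum_{n=1}^{k}\sqrt{B_{n}}\Big)\|x\|.$$
Squaring the two displays gives $A_{p}\lambda^{2}\|K^{\ast}x\|^{2}\leq\|\{\sum_{n=1}^{k}\alpha_{n}T_{n,i}(x)\}_{i}\|^{2}\leq(\max_{1\leq n\leq k}|\alpha_{n}|)^{2}(\sum_{n=1}^{k}\sqrt{B_{n}})^{2}\|x\|^{2}$, and the norm characterization then shows that $\{\sum_{n=1}^{k}\alpha_{n}T_{n,i}\}_{i}$ is a $K$-operator frame, with bounds $A_{p}\lambda^{2}$ and $(\max_{n}|\alpha_{n}|)^{2}(\sum_{n}\sqrt{B_{n}})^{2}$.

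For the converse I would assume $\{\sum_{n=1}^{k}\alpha_{n}T_{n,i}\}_{i}$ is a $K$-operator frame with bounds $A,B$, that each $\{T_{p,i}\}_{i}$ is a $K$-operator frame with bounds $A_{p},B_{p}$, and that $K$ is a co-isometry, so $\|K^{\ast}x\|=\|x\|$ for every $x$. The co-isometry hypothesis lets me rewrite the upper bound for $\{T_{p,i}\}_{i}$ as $\frac{1}{B_{p}}\|\{T_{p,i}x\}_{i}\|^{2}\leq\|x\|^{2}=\|K^{\ast}x\|^{2}$ and the lower bound for the sum as $A\|K^{\ast}x\|^{2}\leq\|\{\sum_{n=1}^{k}\alpha_{n}T_{n,i}(x)\}_{i}\|^{2}$; combining them gives $\frac{A}{B_{p}}\|\{T_{p,i}x\}_{i}\|^{2}\leq\|\{\sum_{n=1}^{k}\alpha_{n}T_{n,i}(x)\}_{i}\|^{2}$, so $\lambda=\frac{A}{B_{p}}$ works in the squared form used in the analogous finite-sum theorem (equivalently $\lambda=\sqrt{A/B_{p}}$ in the unsquared form written in the statement).

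The only delicate point — which I expect to be the main obstacle — is the role played by the co-isometry hypothesis in the converse: the lower frame inequality is phrased in terms of $\|K^{\ast}x\|$ whereas the upper one is phrased in terms of $\|x\|$, and without the identity $\|K^{\ast}x\|=\|x\|$ there is no mechanism to compare the two norms, so the extraction of $\lambda$ would break down. Everything else is routine: the triangle inequality in $\ell^{2}(\mathcal{H})$, the elementary bound $\sum_{n}|\alpha_{n}|\sqrt{B_{n}}\leq(\max_{n}|\alpha_{n}|)\sum_{n}\sqrt{B_{n}}$, and the already-established equivalence between the $\mathcal{A}$-valued and the scalar-norm formulations of a $K$-operator frame.
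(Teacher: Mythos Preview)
Your proposal is correct and follows essentially the same route as the paper: the same chain $\sqrt{A_{p}}\lambda\|K^{\ast}x\|\leq\lambda\|\{T_{p,i}x\}_{i}\|\leq\|\{\sum_{n}\alpha_{n}T_{n,i}x\}_{i}\|\leq(\max_{n}|\alpha_{n}|)(\sum_{n}\sqrt{B_{n}})\|x\|$ for the forward direction, and the same combination of $\frac{1}{B_{p}}\|\{T_{p,i}x\}_{i}\|^{2}\leq\|x\|^{2}=\|K^{\ast}x\|^{2}\leq\frac{1}{A}\|\{\sum_{n}\alpha_{n}T_{n,i}x\}_{i}\|^{2}$ via the co-isometry hypothesis for the converse, yielding $\lambda=A/B_{p}$ in squared form. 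The only cosmetic difference is that you invoke the norm characterization theorem explicitly at the end, whereas the paper leaves that step implicit.
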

\begin{proof}
	For any $x\in\mathcal{H}$, we have
	\begin{align*}
	\sqrt{A_{p}}\lambda\|\langle K^{\ast}x,K^{\ast}x\rangle\|^{\frac{1}{2}}&\leq\lambda\|\{T_{p,i}x\}_{i}\|\\
	&\leq\|\{\sum_{n=1}^{k}\alpha_{n}T_{n,i}(x)\}_{i}\|\\
	&\leq\sum_{n=1}^{k}|\alpha_{n}|\|\{T_{n,i}x\}_{i}\|\\
	&\leq\max_{1\leq n\leq k}|\alpha_{n}|\sum_{n=1}^{k}\|\{T_{n,i}x\}_{i}\|\\
	&\leq\max_{1\leq n\leq k}|\alpha_{n}|(\sum_{n=1}^{k}\sqrt{B_{n}})\|\langle x,x\rangle\|^{\frac{1}{2}}.
	\end{align*}
	Then forall $x\in\mathcal{H}$, we have
	\begin{equation*}
	\sqrt{A_{p}}\lambda\|\langle K^{\ast}x,K^{\ast}x\rangle\|^{\frac{1}{2}}\leq\|\{\sum_{n=1}^{k}\alpha_{n}T_{n,i}(x)\}_{i}\|\leq\max_{1\leq n\leq k}|\alpha_{n}|(\sum_{n=1}^{k}\sqrt{B_{n}})\|\langle x,x\rangle\|^{\frac{1}{2}}.
	\end{equation*}
	So forall $x\in\mathcal{H}$,
	\begin{equation*}
	A_{p}\lambda^{2}\|\langle K^{\ast}x,K^{\ast}x\rangle\|\leq\|\{\sum_{n=1}^{k}\alpha_{n}T_{n,i}(x)\}_{i}\|^{2}\leq(\max_{1\leq n\leq k}|\alpha_{n}|)^{2}(\sum_{n=1}^{k}\sqrt{B_{n}})^{2}\|\langle x,x\rangle\|.
	\end{equation*}
	Hence $\{\sum_{n=1}^{k}\alpha_{n}T_{n,i}\}_{i}$ is a $K$-operator frame for $End_{\mathcal{A}}^{\ast}(\mathcal{H})$.\\
	Conversly, let $K$ is a co-isometry operator on $\mathcal{H}$, let $\{\sum_{n=1}^{k}\alpha_{n}T_{n,i}\}_{i}$ is a $K$-operator frame for $End_{\mathcal{A}}^{\ast}(\mathcal{H})$ with bounds $A$, $B$ and let for any $p\in\{1,2,...,k\}$, $\{T_{p,i}\}_{i}$ be an  a $K$-operator frame for $End_{\mathcal{A}}^{\ast}(\mathcal{H})$ with bounds $A_{p}$ and $B_{p}$. Then, for any $x\in\mathcal{H}$, $p\in\{1,2,...,k\}$, we have
	\begin{equation*}
	A_{p}\|\langle K^{\ast}x,K^{\ast}x\rangle\|\leq\|\{T_{p,i}x\}_{i}\|^{2}\leq B_{p}\|\langle x,x\rangle\|.
	\end{equation*}
	This gives
	\begin{equation*}
	\frac{1}{B_{p}}\|\{T_{p,i}x\}_{i}\|^{2}\leq\|\langle x,x\rangle\|, x\in\mathcal{H}.
	\end{equation*}
	Also, we have
	\begin{equation*}
	A\|\langle K^{\ast}x,K^{\ast}x\rangle\|\leq\|\{\sum_{n=1}^{k}\alpha_{n}T_{n,i}(x)\}_{i}\|^{2}\leq B\|\langle x,x\rangle\|, x\in\mathcal{H}.
	\end{equation*}
	So,
	\begin{equation*}
	\|\langle x,x\rangle\|=\|\langle K^{\ast}x,K^{\ast}x\rangle\|\leq\frac{1}{A}\|\{\sum_{n=1}^{k}\alpha_{n}T_{n,i}(x)\}_{i}\|^{2}, x\in\mathcal{H}.
	\end{equation*}
	Hence
	\begin{equation*}
	\frac{A}{B_{p}}\|\{T_{p,i}x\}_{i}\|^{2}\leq\|\{\sum_{n=1}^{k}\alpha_{n}T_{n,i}(x)\}_{i}\|^{2}, x\in\mathcal{H}.
	\end{equation*}
	Then for $\lambda=\frac{A}{B_{p}}$, we have
	\begin{equation*}
	\lambda\|\{T_{p,i}x\}_{i}\|^{2}\leq\|\{\sum_{n=1}^{k}\alpha_{n}T_{n,i}(x)\}_{i}\|^{2}, x\in\mathcal{H}.
	\end{equation*}	
\end{proof}
\begin{thm}
	Let $K\in End_{\mathcal{A}}^{\ast}(\mathcal{H})$. For $n=1,2,..,k$, let $\{T_{n,i}\}_{i}\subset End_{\mathcal{A}}^{\ast}(\mathcal{H})$ be $K$-operator frame for $End_{\mathcal{A}}^{\ast}(\mathcal{H})$ with bound $A_{n}$ and $B_{n}$, $\{R_{n,i}\}_{i}\subset End_{\mathcal{A}}^{\ast}(\mathcal{H})$ be any sequence. Let $L:\ell^{2}(\mathcal{H})\rightarrow\ell^{2}(\mathcal{H})$ be a bounded linear operator such that $L(\{\sum_{n=1}^{k}R_{n,i}(x)\}_{i})=\{T_{p,i}(x)\}_{i}$, for some $p\in\{1,2,...,k\}$. If there exists a constant $\lambda>0$ such that\begin{equation*}
	\|\sum_{i}\langle(T_{n,i}-R_{n,i})x,(T_{n,i}-R_{n,i})x\rangle\|\leq\lambda\|\sum_{i}\langle T_{n,i}x,T_{n,i}x\rangle\|, x\in\mathcal{H}, n=1,2,..,k.
	\end{equation*}
	Then $\{\sum_{n=1}^{k}R_{n,i}\}_{i}$ is a $K$-operator frame for $End_{\mathcal{A}}^{\ast}(\mathcal{H})$.
\end{thm}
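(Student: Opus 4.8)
The plan is to adapt, essentially word for word, the proof of the corresponding finite-sum stability theorem for ordinary operator frames (the last theorem of Section~3): carry the extra factor $\|L\|$ through the estimates and replace $\|\langle x,x\rangle\|$ by $\|\langle K^{\ast}x,K^{\ast}x\rangle\|$ wherever the lower frame bound is used, and then invoke the norm characterization of $K$-operator frames proved above to pass from scalar inequalities back to a genuine $K$-operator frame.

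First I would fix $x\in\mathcal{H}$ and observe that $\{(T_{n,i}-R_{n,i})x\}_{i}\in\ell^{2}(\mathcal{H})$ by hypothesis, while $\{T_{n,i}x\}_{i}\in\ell^{2}(\mathcal{H})$ because each $\{T_{n,i}\}_{i}$ is a $K$-operator frame; hence $\{R_{n,i}x\}_{i}\in\ell^{2}(\mathcal{H})$ for every $n$ and $\{\sum_{n=1}^{k}R_{n,i}x\}_{i}\in\ell^{2}(\mathcal{H})$, so that $L$ may legitimately be applied to it. Then, using the triangle inequality in $\ell^{2}(\mathcal{H})$, the perturbation hypothesis rewritten as $\|\{(T_{n,i}-R_{n,i})x\}_{i}\|\leq\sqrt{\lambda}\,\|\{T_{n,i}x\}_{i}\|$, and the upper $K$-frame bound $\|\{T_{n,i}x\}_{i}\|\leq\sqrt{B_{n}}\,\|\langle x,x\rangle\|^{\frac{1}{2}}$, I obtain the upper estimate
$$\|\{\sum_{n=1}^{k}R_{n,i}x\}_{i}\|\leq\sum_{n=1}^{k}\big(\|\{(T_{n,i}-R_{n,i})x\}_{i}\|+\|\{T_{n,i}x\}_{i}\|\big)\leq(1+\sqrt{\lambda})\Big(\sum_{n=1}^{k}\sqrt{B_{n}}\Big)\|\langle x,x\rangle\|^{\frac{1}{2}}.$$

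For the lower bound I would use the intertwining relation $L(\{\sum_{n=1}^{k}R_{n,i}x\}_{i})=\{T_{p,i}x\}_{i}$. Since $\{T_{p,i}\}_{i}$ is a $K$-operator frame with lower bound $A_{p}$ we have $\sqrt{A_{p}}\,\|\langle K^{\ast}x,K^{\ast}x\rangle\|^{\frac{1}{2}}\leq\|\{T_{p,i}x\}_{i}\|=\|L(\{\sum_{n=1}^{k}R_{n,i}x\}_{i})\|\leq\|L\|\,\|\{\sum_{n=1}^{k}R_{n,i}x\}_{i}\|$, hence $\frac{\sqrt{A_{p}}}{\|L\|}\|\langle K^{\ast}x,K^{\ast}x\rangle\|^{\frac{1}{2}}\leq\|\{\sum_{n=1}^{k}R_{n,i}x\}_{i}\|$. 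Squaring this together with the upper estimate gives
$$\frac{A_{p}}{\|L\|^{2}}\|\langle K^{\ast}x,K^{\ast}x\rangle\|\leq\Big\|\sum_{i}\big\langle\big(\sum_{n=1}^{k}R_{n,i}\big)x,\big(\sum_{n=1}^{k}R_{n,i}\big)x\big\rangle\Big\|\leq(1+\sqrt{\lambda})^{2}\Big(\sum_{n=1}^{k}\sqrt{B_{n}}\Big)^{2}\|\langle x,x\rangle\|$$
for all $x\in\mathcal{H}$; since $\{\sum_{n=1}^{k}R_{n,i}x\}_{i}\in\ell^{2}(\mathcal{H})$ the middle series converges in norm, so the characterization theorem for $K$-operator frames applies and yields that $\{\sum_{n=1}^{k}R_{n,i}\}_{i}$ is a $K$-operator frame for $End_{\mathcal{A}}^{\ast}(\mathcal{H})$ with bounds $A_{p}/\|L\|^{2}$ and $(1+\sqrt{\lambda})^{2}(\sum_{n=1}^{k}\sqrt{B_{n}})^{2}$.

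I do not anticipate a genuine obstacle: the proof is a chain of triangle inequalities structurally identical to the operator-frame case, the only new ingredients being the factor $\|L\|$ coming from the boundedness of $L$ and the vector $K^{\ast}x$ replacing $x$ in the lower estimate. The single point deserving care is the convergence bookkeeping — checking that every $\ell^{2}(\mathcal{H})$-sequence in sight really lies in $\ell^{2}(\mathcal{H})$, so that $L$ is applicable and the characterization theorem may be invoked — and this is immediate from the stated hypotheses.
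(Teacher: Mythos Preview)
Your proposal is correct and follows essentially the same route as the paper's own proof: the same triangle-inequality chain for the upper bound, the same use of $\|L\|$ and the lower $K$-frame bound of $\{T_{p,i}\}_{i}$ for the lower bound, and the same squared inequalities at the end. If anything, you are slightly more careful than the paper in two places --- you explicitly verify the $\ell^{2}(\mathcal{H})$-membership needed to apply $L$, and you explicitly invoke the norm characterization of $K$-operator frames to pass from the scalar inequalities back to the $\mathcal{A}$-valued definition, a step the paper leaves implicit.
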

\begin{proof}
	For any $x\in\mathcal{H}$, we have
	\begin{align*}
	\|\{\sum_{n=1}^{k}R_{n,i}(x)\}_{i}\|&\leq\sum_{n=1}^{k}\|\{R_{n,i}(x)\}_{i}\|\\
	&\leq\sum_{n=1}^{k}(\|\{(T_{n,i}-R_{n,i})x\}_{i}\|+\|\{T_{n,i}(x)\}_{i}\|)\\
	&\leq\sum_{n=1}^{k}(\sqrt{\lambda}\|\{T_{n,i}(x)\}_{i}\|+\|\{T_{n,i}(x)\}_{i}\|)\\
	&=(1+\sqrt{\lambda})\sum_{n=1}^{k}\|\{T_{n,i}(x)\}_{i}\|\\
	&\leq(1+\sqrt{\lambda})(\sum_{n=1}^{k}\sqrt{B_{n}})\|\langle x,x\rangle\|^{\frac{1}{2}}.
	\end{align*}
	Also, for each $x\in\mathcal{H}$, we have
	\begin{equation*}
	\|L(\{\sum_{n=1}^{k}R_{n,i}(x)\}_{i})\|=\|\{T_{p,i}(x)\}_{i}\|
	\end{equation*}
	Therefore, we get
	\begin{align*}
	\sqrt{A_{p}}\|\langle K^{\ast}x,K^{\ast}x\rangle\|^{\frac{1}{2}}\leq\|\{T_{p,i}x\}_{i}\|&=\|L(\{\sum_{n=1}^{k}R_{n,i}(x)\}_{i})\|\\
	&\leq\|L\|\|\{\sum_{n=1}^{k}R_{n,i}(x)\}_{i}\|, x\in\mathcal{H}
	\end{align*}
	This gives
	\begin{equation*}
	\frac{\sqrt{A_{p}}}{\|L\|}\|\langle K^{\ast}x,K^{\ast}x\rangle\|^{\frac{1}{2}}\leq\|\{\sum_{n=1}^{k}R_{n,i}(x)\}_{i}\|, x\in\mathcal{H}
	\end{equation*}
	Then
	\begin{equation*}
	\frac{\sqrt{A_{p}}}{\|L\|}\|\langle K^{\ast}x,K^{\ast}x\rangle\|^{\frac{1}{2}}\leq\|\{\sum_{n=1}^{k}R_{n,i}(x)\}_{i}\|\leq(1+\sqrt{\lambda})(\sum_{n=1}^{k}\sqrt{B_{n}})\|\langle x,x\rangle\|^{\frac{1}{2}},x\in\mathcal{H}.
	\end{equation*}
	So
	\begin{equation*}
	\frac{A_{p}}{\|L\|^{2}}\|\langle K^{\ast}x,K^{\ast}x\rangle\|\leq\|\{\sum_{n=1}^{k}R_{n,i}(x)\}_{i}\|^{2}\leq(1+\sqrt{\lambda})^{2}(\sum_{n=1}^{k}\sqrt{B_{n}})^{2}\|\langle x,x\rangle\|,x\in\mathcal{H}.
	\end{equation*}
	Hence $\{\sum_{n=1}^{k}R_{n,i}\}_{i}$ is a $K$-operator frame for $End_{\mathcal{A}}^{\ast}(\mathcal{H})$.
\end{proof} 

\subsection*{Acknowledgments}  The authors would like to thank the referee for useful and helpful comments and
suggestions.

\end{document}